\documentclass{article}

\usepackage[utf8]{inputenc}

\usepackage{parskip, amsfonts, amsmath, changepage, amsthm, setspace, geometry, verbatim, MnSymbol,chemarrow}

\usepackage[pdftex]{graphicx}
\graphicspath{ {./images/} }

\usepackage[T1]{fontenc}

\usepackage{authblk}


\usepackage{MnSymbol, wasysym, mathtools}


\geometry{letterpaper, margin=1in}

\doublespacing

\usepackage[tiny]{titlesec}

\usepackage{hyperref}



\newcommand*\tp[1]{\big( \begin{smallmatrix}#1\end{smallmatrix} \big)}

\newcommand*\ab[1]{\left\langle #1 \right\rangle}

\newcommand*\Mod[1]{ \; (\textup{mod} \; #1 )}

\newcommand*\tops[2]{\texorpdfstring{#1}{#2}}

\newcommand*\ol[1]{\overline{#1}}

\newcommand*{\Z}{\mathbb{Z}}
\newcommand*{\N}{\mathbb{N}}
\newcommand*{\R}{\mathbb{R}}
\newcommand*{\Q}{\mathbb{Q}}

\newcommand*\Cay{\textup{Cay}}

\renewcommand{\phi}{\varphi}

\newcommand*\Sec[1]{*{#1} \phantomsection \addcontentsline{toc}{section}{#1}}


\newtheorem{Thm}{Theorem}[section]

\newtheorem{Lem}[Thm]{Lemma}

\theoremstyle{definition}

\theoremstyle{remark}
\newtheorem{Ex}[Thm]{Example}
\newtheorem{Rmk}[Thm]{Remark}

\providecommand{\keywords}[1]
{
  \small	
  \textbf{\textit{Keywords---}} #1
}

\title{Chromatic numbers of Cayley graphs of abelian groups: A matrix method}
\author[a]{Jonathan Cervantes}
\author[b]{Mike Krebs}
\affil[a]{University of California, Riverside, Dept. of Mathematics, Skye Hall, 900 University Ave., Riverside, CA 92521, jcerv092@ucr.edu}
\affil[b]{California State University, Los Angeles, Dept. of Mathematics, 5151 State University Drive, Los Angeles, CA 91711, mkrebs@calstatela.edu}
\date{\today}

\begin{document}

\maketitle

\keywords{graph, chromatic number, abelian group, Cayley graph, integer distance graph, cube-like graph, circulant graph, Payan's theorem, Zhu's theorem}

\begin{abstract}

In this paper, we take a modest first step towards a systematic study of chromatic numbers of Cayley graphs on abelian groups.  We lose little when we consider these graphs only when they are connected and of finite degree.  As in the work of Heuberger and others, in such cases the graph can be represented by an $m\times r$ integer matrix, where we call $m$ the dimension and $r$ the rank.  Adding or subtracting rows produces a graph homomorphism to a graph with a matrix of smaller dimension, thereby giving an upper bound on the chromatic number of the original graph.  In this article we develop the foundations of this method.  As a demonstration of its utility, we provide an alternate proof of Payan's theorem, which states that a cubelike graph (i.e., a Cayley graph on the product $\mathbb{Z}_2\times\cdots\times\mathbb{Z}_2$ of the integers modulo $2$ with itself finitely many times) cannot have chromatic number $3$.  In a series of follow-up articles using the method of Heuberger matrices, we completely determine the chromatic number in cases with small dimension and rank, as well as prove a generalization of Zhu's theorem on the chromatic number of $6$-valent integer distance graphs.
\end{abstract}

\paragraph*{MSC2020 subject classification: }05C15

\section{Introduction}


Chromatic numbers of Cayley graphs on abelian groups have been studied in many particular cases, including integer distance graphs, circulant graphs, unit-distance graphs, cube-like graphs, Paley graphs, and so on.  The intention of the present paper is to begin to lay a foundation for a systematic unified approach to problems of this sort.

We recall the Cayley graph construction.  Given a (possibly infinite) group $G$ and a subset $S$ of $G$, we say $S$ is \emph{symmetric} if we have that $s^{-1}\in S$ whenever $s\in S$.    Given a group $G$ and a symmetric subset $S$ of $G$, we define the \emph{Cayley graph} of $G$ with respect to $S$, denoted $\text{Cay}(G,S)$, to be the graph with vertex set $G$, where two vertices $x$ and $y$ are adjacent if and only if $x=ys$ for some $s\in S$.  (Some authors require $S$ to be a generating set for $G$ in order to define the Cayley graph at all; for now we do not impose this restriction, but very shortly we will reverse course and assume after all that $S$ generates $G$.)  The set $S$ being symmetric makes $\text{Cay}(G,S)$ an undirected graph. An \emph{abelian Cayley graph} is a Cayley graph on an abelian group.  We also recall that the \emph{chromatic number} of a graph $X$, denoted $\chi(X)$, is the smallest number of colors needed to assign every vertex of $X$ a color so that if $v$ and $w$ are adjacent vertices, then $v$ and $w$ are assigned different colors.  (While the letter $G$ is often used to denote a graph, this can be misleading for Cayley graphs, as the ``$G$'' suggests a group.  We tend to use $G$ for groups and $X$ for graphs.  Some other authors prefer $\Gamma$ for groups and $G$ for graphs.)  From now on we shall assume the reader is familiar with basic terminology in graph theory, as in \cite{west_introduction_2000}, as well as basic concepts in group theory, as in \cite{Dummit_Foote_2004}.

So far as the authors have been able to ascertain, no set of methodical procedures for determining the chromatic number of an abelian Cayley graph currently exists.  As one contributor notes in a post labeled ``Chromatic numbers of infinite abelian Cayley graphs'' on the online bulletin board MathOverflow \cite{MathOverflow}, ``$\,\dots$I know very little about [this], and to my surprise, I wasn't able to find much in the literature!''  Tao expresses a similar sentiment in \cite{Tao}: ``There is a bit of literature on chromatic numbers of abelian Cayley graphs, though from a quick search I didn’t find anything that would make it substantially easier to compute the chromatic number of such graphs as compared to general graphs. Still it might be worth keeping the Cayley graph literature in mind.''

\vspace{.1in}

Given a group $G$ and a symmetric subset $S$ of $G$, let $G_e$ be the subgroup of $G$ generated by $S$.  Then the subgraph $X_e$ of $X=\text{Cay}(G,S)$ induced by $G_e$ is precisely the connected component of $X$ containing the identity element $e$.  It has been previously observed that $\chi(X_e)=\chi(X)$.  To prove this claim, it suffices to show that every connected component of $X$ is isomorphic to $X_e$.  One obtains this isomorphism by letting $T$ be a left transversal set for $G_e$ in $G$, then taking the mapping $tx\mapsto x$ for $t\in T, x\in G_e$.  Consequently, in our study of chromatic numbers of Cayley graphs, it suffices to consider connected graphs.  (\emph{Nota bene}: The existence of $T$ requires the axiom of choice (AC).  Sans AC, the statement $\chi(X_e)=\chi(X)$ may fail---see \cite{Shelah-Soifer} for an example of this phenomenon involving an abelian Cayley graph.)

Moreover, when $X=\text{Cay}(G,S)$ has a finite chromatic number, it follows from the de Bruijn-Erd\H{o}s theorem \cite{de-Bruijn-Erdos} that $\chi(X)$ equals the maximum, over all finite symmetric subsets $R$ of $S$, of $\chi(\text{Cay}(G,R))$.  We remark that AC again plays a role here, as the proof of the de Bruijn-Erd\H{o}s theorem relies on it.

For the reasons noted in the previous two paragraphs, in this paper we confine our attention to finite-degree connected abelian Cayley graphs.  That is, we assume that $G$ is an abelian group; that $S$ is a finite, symmetric subset of $G$; and that $S$ generates $G$.

\vspace{.1in}

We require that our graphs be undirected, and we do not allow multiple edges.  However, we do permit them to have loops.  These will occur whenever the identity element is in $S$.  (Recall that a \emph{loop} is an edge from a vertex to itself.)  The reason for allowing loops will become apparent as we go along---they arise naturally as a by-product of our methods.  A graph with a loop cannot be properly colored.

\subsection{Previous work}

The problem of finding the chromatic number of an abelian Cayley graph has been studied in detail in many particular instances.  Below, we briefly touch on a few of these.

\begin{itemize}

\item \textbf{Integer distance graphs.}  An \emph{integer distance graph} is a Cayley graph on the group $\mathbb{Z}$ of integers.  Chromatic numbers of such graphs have been widely investigated; see \cite{Liu} for a survey of this subject.  In Example \ref{example-ees} below, we see how our methods can be used to recover a simple but foundational result for integer distance graphs due to Eggleton, Erd\H{o}s, and Skilton.  In \cite{Zhu}, Zhu determines the chromatic number of all integer distance graphs of the form $\text{Cay}(\mathbb{Z},\{\pm a, \pm b, \pm c\})$.  In \cite{Cervantes-Krebs-Zhu}, we recover Zhu's results using our techniques, in the process obtaining improved upper bounds for the periods of optimal colorings of such graphs.

\item \textbf{Circulant graphs.} A \emph{circulant graph} is a Cayley graph on the group $\mathbb{Z}_n$ of integers modulo $n$.  Chromatic numbers of circulant graphs have been explored extensively, for example in \cite{Barajas-Serra, Heuberger, Ilic, Meszka, Nicoloso}.

\item \textbf{Unit-distance graphs.} The long-standing ``chromatic number of the plane'' problem (sometimes referred to as the Hadwiger-Nelson problem) asks for the minimum number of colors needed to assign every point in $\mathbb{R}^2$ a color so that points of distance $1$ from each other always receive different colors.  Equivalently, we can ask for the chromatic number $\chi(\mathbb{R}^2)$ of the graph $G=\text{Cay}(\mathbb{R}^2,D)$, where $D$ is the unit circle.  Subgraphs of $G$ are called \emph{unit-distance graphs}.  Currently, it is known that $\chi(\mathbb{R}^2)$ is at least $5$ and at most $7$.  The book \cite{Soifer} details the history of this problem.  The lower bound of $5$ was proved by de Grey in \cite{deGrey}, subsequent to the publication of \cite{Soifer}.  Example \ref{example-chi-of-plane-root-39} below illustrates how our methods can be used to compute chromatic numbers of unit-distance graphs in some cases.

\item \textbf{Cube-like graphs.}  Let $A$ be a finite set, and let $\mathcal{P}(A)$ be the power set of $A$, that is, the set of all subsets of $A$.  Then $\mathcal{P}(A)$ is an abelian group under the symmetric difference operation.  A \emph{cube-like graph} is a Cayley graph on $\mathcal{P}(A)$.  In \cite{Payan}, Payan proves that a cube-like graph cannot have chromatic number $3$.  Other results on chromatic numbers of cube-like graphs can be found, for example, in \cite{Kokkala-et-al} as well as \cite[Section 9.7]{Jensen}.  In Section \ref{section-payans-theorem}, we provide an alternate proof of Payan's theorem using our methods.

\item \textbf{Paley graphs.} Let $F$ be a finite field such that $q=|F|$ is congruent to $1$ modulo $4$.  Then, regarding $F$ as a group under addition, the set $S$ of quadratic residues in $F$ is symmetric.  The \emph{Paley graph} $G(q)$ is defined by $G(q)=\text{Cay}(F,S)$.  The Mathematical Reviews summary for \cite{Broere-et-al} states that ``The authors prove in this paper that the clique number and the chromatic number of the Paley graph $G(p^{2r})$ are both $p^r$, where $p$ is an odd prime and $r$ is a natural number.''

\end{itemize}

Some results of a general nature are known, and we touch on several of these below.  None of them, however, yield exact values of chromatic numbers.

\begin{itemize}

\item \textbf{Probabilistic results.} In \cite{Alon}, Alon studies chromatic numbers of random Cayley graphs, with some theorems pertaining to the particular case of random Cayley graphs on abelian groups.

\item \textbf{Spectral bounds.} Classically, Hoffman \cite{Hoffman} has a lower bound on the chromatic number of a finite graph in terms of the eigenvalues of its adjacency matrix.  For a finite Cayley graph on an abelian group $G$, these eigenvalues can in turn be expressed in terms of the irreducible characters of $G$, as discussed many places, including \cite{Krebs-Shaheen}.  This method is employed, for instance, by Vinh in \cite{Vinh} to obtain lower bounds on chromatic numbers of a certain class of finite abelian Cayley graphs.

\item \textbf{Computational complexity.} In \cite{Codenotti-et-al}, it is shown that finding the chromatic number is NP-hard for circulant graphs.  In \cite{Godsil} it is shown that, under the assumption that $P\neq NP$, no polynomial time algorithm exists for determining the chromatic number of cube-like graphs.

\item \textbf{No-hole 2-distant colorings.} A {\it no-hole 2-distant coloring} of a graph $X$ is a mapping $c$ from the vertex set $V$ of $X$ to the non-negative integers such that (i) if $u$ and $v$ are adjacent vertices, then $|c(u)-c(v)|\geq 2$, and (ii) the image of $V$ under $c$ consists of consecutive integers.  In \cite{Chang}, Chang, Lu, and Zhou prove that for a finite abelian group $G$, the Cayley graph $\text{Cay}(G,S)$ has a no-hole 2-distant coloring if and only if $G\setminus S$ generates $G$.  (Here $G\setminus S$ denotes the complement of $S$ in $G$.)\end{itemize}

\subsection{Summary of this paper and its follow-on articles}

This paper can be regarded as the first in a suite of three articles, the other two being \cite{Cervantes-Krebs-small-cases} and \cite{Cervantes-Krebs-Zhu}.  This set of papers is organized as follows.

$\bullet$ In Section \ref{section-preliminaries} of the present paper, we lay the groundwork.  We begin by showing that any finite-degree connected abelian Cayley graph can be represented by an associated integer matrix we call the ``Heuberger matrix.''  We show how basic chromatic properties of the graph, such as bipartiteness, can be found immediately from the matrix.  We then catalog several graph isomorphisms and homomorphisms induced by various row and column operations, and we give examples to show how these can be used to compute chromatic numbers in some cases.

$\bullet$ In Section \ref{section-payans-theorem} of the present paper, we use this method of Heuberger matrices to provide an alternate proof of Payan's theorem on cube-like graphs.

$\bullet$ In \cite{Cervantes-Krebs-small-cases}, we state and prove two main results which give precise and easily checked numerical conditions that completely determine the chromatic number when the associated Heuberger matrix is $2\times 2$ or $3\times 2$.

$\bullet$ In \cite{Zhu}, Zhu finds the chromatic number for an arbitrary integer distance graph of the form $\text{Cay}(\mathbb{Z},\{\pm a,\pm b,\pm c\})$, where $a, b,$ and $c$ are distinct positive integers.  Such graphs, we show, have associated $3\times 2$ matrices.  Hence, in \cite{Cervantes-Krebs-Zhu}, we demonstrate how the results of \cite{Cervantes-Krebs-small-cases} yield Zhu's theorem as a corollary.

\section{Preliminaries}\label{section-preliminaries}

In this section we establish the basic definitions and lemmas that will be used throughout the remainder of this paper as well as \cite{Cervantes-Krebs-small-cases} and \cite{Cervantes-Krebs-Zhu}.


\subsection{Standardized abelian Cayley graphs}\label{subsection-Standardized-abelian-Cayley-graphs}

For reasons noted in the introduction, we restrict ourselves to finite-degree connected abelian Cayley graphs.  Let $G$ be an abelian group written using additive notation, and let $S=\{\pm g_1,\dots,\pm g_m\}$ be a symmetric subset of $G$ that generates $G$.  Let $\mathbb{Z}$ denote the group of integers under addition, and let $\mathbb{Z}^m$ denote the $m$-fold product of $\mathbb{Z}$ with itself.
Let $e_k$ be the element of $\mathbb{Z}^m$ which equals $0$ everywhere except in the $k$th component, where it equals $1$.  Define a group homomorphism $\phi\colon\mathbb{Z}^m\to G$ by $e_k\mapsto g_k$.  Let $H$ be the kernel of $\phi$.  It is straightforward to see that $\phi$ induces a graph isomorphism between $X=\text{Cay}(\mathbb{Z}^m/H,\{H\pm e_1,\dots,H\pm e_m\})$ and $\text{Cay}(G,S)$.  In this way we standardize our graphs so that the generating set always (essentially) consists of the canonical basis vectors.

We can standardize further still.  Elements of $\mathbb{Z}^m$ are $m$-tuples of integers; we write them as column vectors.  Suppose we have $y_1,\dots,y_r\in \mathbb{Z}^m$ such that $H=\langle y_1,\dots,y_r \rangle$, the subgroup of $\mathbb{Z}^m$ generated by $y_1,\dots,y_r$.  We adopt the following convention.  To represent the graph $X$, we write the $m\times r$ matrix $M_X$ whose $j$th column is $y_j$, with the superscript $\text{SACG}$ and subscript $X$.  Examples \ref{example-cycle}--\ref{example-arbitrary-circulant-graph} illustrate the usage of this notation.  Hence, given a finite-degree connected abelian Cayley graph---even if it is of infinite order---all relevant information about it is contained in this finite matrix with integer entries.  We refer to a graph $X$ in this form as a \emph{standardized abelian Cayley graph} (hence the ``SACG''), and we call $M_X$ a \emph{Heuberger matrix} of $X$.  The authors got the idea for representing an abelian Cayley graph this way from \cite{Heuberger}, ergo the eponym.  We call $m$ the \emph{dimension} of $M_X$, and we call $r$ the $\emph{rank}$ of $M_X$.  When the graph does not need to be named, we sometimes omit the subscript.  Note that $M_X$ is not unique; a standardized abelian Cayley graph can have many different Heuberger matrices associated to it.

Often we will wish to reverse the process, that is, to define an abelian Cayley graph from a given $m\times r$ integer matrix $M$.  In this case, we take $H$ to be the subgroup of $\mathbb{Z}^m$ generated by the columns of $M$; we let $G=\mathbb{Z}^m/H$, and we take $S$ to be $\{H\pm e_1,\dots,H\pm e_m\}$.  Then $M$ is a Heuberger matrix of $\text{Cay}(G,S)$.  Hence in the sequel when we write $M_X^\text{SACG}$ without reference to a given abelian Cayley graph, we mean that $X$ equals $\text{Cay}(G,S)$ in this manner.

\begin{Ex}\label{example-cycle} Let $n\in\mathbb{Z}$.  We have that $\text{Cay}(\mathbb{Z}_n,\{\pm 1\})\cong (n)^{\text{SACG}}_X$.   Hence \[(n)^{\text{SACG}}_X\text{ is }\begin{cases}
\text{ a doubly infinite path graph } & \text{if }n=0\\

\text{ a single vertex with a loop } & \text{if }|n|=1\\

\text{ a path of length }1 & \text{if }|n|=2\\

\text{ an }|n|\text{-cycle} & \text{if }|n|\geq 3.\end{cases}\]\hfill $\square$ \end{Ex}

\vspace{.1in}

\begin{Ex}\label{example-circulant}We have that the circulant graph $\text{Cay}(\mathbb{Z}_{35},\{\pm 6,\pm 10\})$ is isomorphic to \[\begin{pmatrix}5 & 0\\
4 & 7\end{pmatrix}^{\text{SACG}}_X,\] as shown in \cite[Example 12]{Heuberger}.  Later, we will refer to this graph and others like it as ``Heuberger\linebreak circulants.''\hfill $\square$\end{Ex}

\vspace{.1in}

\begin{Ex}\label{example-MX}Consider $W=\text{Cay}(\mathbb{Z},\{\pm6,\pm10,\pm25\})$.  The graph $W$ is an example of an integer distance graph; later, we will refer to $W$ as a ``Zhu \{6,10,25\} graph.''  Taking $\phi\colon e_1\mapsto 6, e_2\mapsto 10, e_3\mapsto 25$ and all other notation as above, after some computation we find that $H$ is generated by $(5,-12,6)^t$ and $(0,5,-2)^t$.  Hence we write the graph $X$ as \(\begin{pmatrix}
5 & 0\\
-12 & 5\\
6 & -2
\end{pmatrix}^{\text{SACG}}_X,\)  with Heuberger matrix \(\begin{pmatrix}5 & 0\\
-12 & 5\\
6 & -2\end{pmatrix}.\)\hfill $\square$
\end{Ex}

\vspace{.1in}

\begin{Ex}\label{example-arbitrary-distance-graph} We now discuss matrices associated to integer distance graphs in general.  Let $a_1,\dots,a_{r+1}$ be positive integers with $\gcd(a_1,\dots,a_{r+1})=1$, where $r\geq 2$.  Let $g_k=\gcd(a_1,\dots,a_k)$ for $k=2,\dots,r$.  Let $u_{ij}$ be integers such that $a_1u_{1k}+\cdots+a_ku_{kk}=a_{k+1}g_k/g_{k+1}$ for $k=2,\dots, r$.  Using elementary number theory as in \cite{Pommersheim} to find all solutions to the linear homogeneous Diophantine equation 
$a_1 x_1+\cdots+a_{r+1} x_{r+1}=0$, we find that $\text{Cay}(\mathbb{Z},\{\pm a_1,\dots,\pm a_{r+1}\})$ is isomorphic to \[\begin{pmatrix}
\frac{a_2}{g_2} & -u_{12} & -u_{13} & \cdots & -u_{1,r-1} & -u_{1r}\\
-\frac{a_1}{g_2} & -u_{22} & -u_{23} & \cdots & -u_{2,r-1} & -u_{2r}\\
0 & \frac{g_2}{g_3} & -u_{33} & \cdots & -u_{3,r-1} & -u_{3r}\\
0 & 0 & \frac{g_3}{g_4} & \cdots & -u_{4,r-1} & -u_{4r}\\
\vdots & \vdots & \vdots & \ddots & \vdots & \vdots\\
0 & 0 & 0 & \cdots &  \frac{g_{r-1}}{g_r} & -u_{rr}\\
0 & 0 & 0 & \cdots & 0 & g_r
\end{pmatrix}^{\text{SACG}}_X.\]

Conversely, suppose $M_X$ is an $(r+1)\times r$ matrix associated to a standardized abelian Cayley graph $X$.  Let $v=(v_1,\dots,v_{r+1})^t$ be the column vector whose $j$th component equals $(-1)^j$ times the determinant of the matrix obtained by deleting the $j$th row from $M_X$.  In other words, $v$ is the generalized cross product (essentially, the exterior product) of the columns of $M_X$.  We claim that if $v\neq 0$ and $\gcd(v_1,\dots,v_{r+1})=1$, then $X$ is isomorphic to $\text{Cay}(\mathbb{Z},\{\pm v_1,\dots,\pm v_{r+1}\})$.

To prove this, take the homomorphism $\phi\colon\mathbb{Z}^{r+1}\to \mathbb{Z}$ with $\phi\colon e_j\mapsto v_j$.  Let $y_1,\dots,y_r$ be the columns of $M_X$, and let $H$ be the subgroup of $\mathbb{Z}^{r+1}$ generated by $\{y_1,\dots,y_r\}$.  We must show that $\text{ker}(\phi)=H$.  The fact that $H\subset \text{ker}(\phi)$ follows from $M_X^tv=0$, which in turn follows from cofactor expansion.  Conversely, suppose that $(c_1,\dots,c_{r+1})=c\in \text{ker}(\phi)$, or equivalently, that the dot (a.k.a. inner) product $cv=0$.  We will show that $c\in H$.  It suffices to prove this when $\text{gcd}(c_1,\dots,c_{r+1})=1$.  Let $H_\mathbb{Q}$ be the span of $\{y_1,\dots,y_r\}$ over the rational numbers $\mathbb{Q}$.  Since $v\neq 0$, it follows that $H_\mathbb{Q}$ has dimension $r$ over $\mathbb{Q}$.  Thus the orthogonal complement $H_\mathbb{Q}^\perp$ of $H_\mathbb{Q}$ in $\mathbb{Q}^{r+1}$ equals $\text{span}_\mathbb{Q} v= H_\mathbb{Q}^\perp$.  So $H_\mathbb{Q}=v^\perp$.  Hence $c\in H_\mathbb{Q}$.  In other words, $c=M_X q$ for some $q\in\mathbb{Q}^r$.  From $\text{gcd}(v_1,\dots,v_{r+1})=1$, we have from the theory of the Smith normal form \cite{Dummit_Foote_2004} that there exist unimodular matrices $U$, $V$, such that \[UM_X V=\begin{pmatrix}
I_r\\
0
\end{pmatrix},\] where $I_r$ denotes the $r\times r$ identity matrix and $0$ denotes a zero row.  

Take \[q=\left(\frac{a_1}{b_1},\dots,\frac{a_{r+1}}{b_{r+1}}\right)\]for some integers $a_i, b_i$ with $\text{gcd}(a_i,b_i)=1$ for all $i$.  Let $b=b_1\cdots b_{r+1}$.  Observe that $(UM_x V)(V^{-1} bq) = Ubc,$ so the nonzero entries of $Ubc$ are the same as those of $V^{-1} bq$.   For a tuple $w$ of integers, let $\text{gcd}(w)$ denote the greatest common denominator of its entries.  Note that if $w$ is a $k$-tuple of integers and $A$ is unimodular, then $\text{gcd}(w)=\text{gcd}(Aw)$.  (This follows by writing $A$ as a product of elementary matrices, none of which have any effect on gcd.)  Putting the pieces together, we have \[\text{gcd}(bq)=\text{gcd}(V^{-1}bq)=\text{gcd}(Ubc)=\text{gcd}(bc)=b.\]

Therefore $b|\frac{b}{b_j}\cdot a_j$ for all $j=1,\dots,r+1$.  Hence for all $j$ we have that $b_j|a_j$, which implies that $b_j=\pm 1$, because $\text{gcd}(b_j,a_j)=1$.  Therefore $q\in\mathbb{Z}^r$, showing that $c\in H$.  \hfill$\square$
\end{Ex}

\begin{Ex}\label{example-arbitrary-circulant-graph} In a similar vein, we can construct Heuberger matrices associated to arbitrary circulant graphs.  Take all notation as in Example \ref{example-arbitrary-distance-graph}, and let $n=a_{r+1}$.  Then the circulant graph $C_n(a_1,\dots,a_r):=\text{Cay}(\mathbb{Z}_n,\{\pm a_1,\dots,\pm a_r\})$ is isomorphic to \[\begin{pmatrix}
\frac{a_2}{g_2} & -u_{12} & -u_{13} & \cdots & -u_{1,r-1} & -u_{1r}\\
-\frac{a_1}{g_2} & -u_{22} & -u_{23} & \cdots & -u_{2,r-1} & -u_{2r}\\
0 & \frac{g_2}{g_3} & -u_{33} & \cdots & -u_{3,r-1} & -u_{3r}\\
0 & 0 & \frac{g_3}{g_4} & \cdots & -u_{4,r-1} & -u_{4r}\\
\vdots & \vdots & \vdots & \ddots & \vdots & \vdots\\
0 & 0 & 0 & \cdots &  \frac{g_{r-1}}{g_r} & -u_{rr}\end{pmatrix}^{\text{SACG}}_Y.\]This is the same matrix as in Example \ref{example-arbitrary-distance-graph}, but with the last row deleted.

Conversely, suppose $M_X$ is an $r\times r$ matrix.  Let $M'_X$ be the matrix obtained by deleting the first column from $M_X$.  Let $v=(v_1,\dots,v_{r})^t$ be the column vector whose $j$th component equals $(-1)^j$ times the determinant of the matrix obtained by deleting the $j$th row from $M'_X$.  Let $n=\det M_X$.  We claim that if $n\neq 0$ and $v\neq 0$ and $\gcd(v_1,\dots,v_{r})=1$, then $X$ is isomorphic to $\text{Cay}(\mathbb{Z}_n,\{\pm v_1,\dots,\pm v_{r}\})$.  To prove this, take the homomorphism $\phi\colon\mathbb{Z}^{r}\to \mathbb{Z}_n$ with $\phi\colon e_j\mapsto v_j$.  Let $y_1,\dots,y_r$ be the columns of $M_X$, and let $H$ be the subgroup of $\mathbb{Z}^{r}$ generated by $\{y_1,\dots,y_r\}$.  We must show that $\text{ker}(\phi)=H$.  The fact that $H\subset \text{ker}(\phi)$ follows from cofactor expansion.  Conversely, suppose that $(c_1,\dots,c_{r})=c\in \text{ker}(\phi)$, or equivalently, that $c\cdot v\equiv 0\Mod{n}$, where $\cdot $ denotes the dot product.  We will show that $c\in H$.  From $c\cdot v\equiv 0\Mod{n}$ we have that $c\cdot v^t = kn$ for some integer $k$.  Because $\det M_x=n$, we have that $y_1\cdot v^t=n$.  Hence $(c-ky_1)\cdot v^t =0$.  From the results of Example \ref{example-arbitrary-distance-graph}, we have that $c-ky_1$ equals a linear combination of columns of $M_X'$ with integer coefficients.  Thus $c\in H$.

We note that if $\text{gcd}(n,\text{gcd}(v))=1$, then $\text{gcd}(v)=1$.  This follows from the fact that a matrix times its adjoint equals its determinant times the identity matrix, and $v$ is a row of the adjoint.

Of course, one could just as well delete the last column instead of the first column.  We remark that an $n\times n$ square matrix $M$ with $n\geq 2$ has the property that there exists a unimodal matrix $U$ such that the gcd of the determinants of the $(n-1)\times (n-1)$ minors of the submatrix formed by deleting the last column of $MU$ is $1$ if and only if the first $n-1$ diagonal entries in the Smith normal form of $M$ are all $1$.  In \cite{Ekedahl}, Ekedahl proves that for square integer matrices, the asymptotic probability of all but one diagonal entry in the Smith normal form being 1 is approximately $0.846936$.  The precise figure involves values of the Riemann zeta function.  In a rough sense, then, we can say that at least $5/6$ of all SACGs with square Heuberger matrices of a fixed size are isomorphic to circulant graphs.  In \cite{Cervantes-Krebs-small-cases}, we give an example of an SACG with a $2\times 2$ Heuberger matrix that is not isomorphic to a circulant graph.\hfill$\square$
\end{Ex}

For a standardized abelian Cayley graph $X$, various row and column operations can be performed to an associated Heuberger matrix $M_X$ to produce an isomorphic (indeed, sometimes identical) graph $X'.$  In the following lemma, we catalog some of these.  Recall that the \emph{$\mathbb{Z}$-span} of a set $\{y_1,\dots,y_r\}$ of elements of an abelian group is the set $$\{a_1y_1+\cdots+a_ry_r\;|\;a_1,\dots,a_r\in\mathbb{Z}\},$$ that is, the set of linear combinations of $y_1,\dots,y_r$ with integer coefficients.

\begin{Lem}\label{lemma-isomorphisms}
Let $X$ and $X'$ be standardized abelian Cayley graphs with Heuberger matrices $M_X$ and $M_{X'}$, respectively.\begin{enumerate}

\item If $M_{X'}$ is obtained by permuting the columns of $M_X$, then $X=X'$.

\item If $M_{X'}$ is obtained by multiplying a column of $M_X$ by $-1$, then $X=X'$.

\item\label{item-add-multiple of column} Suppose $y_j$ and $y_i$ are the $j$th and $i$th columns of $M_X$, respectively, with $j\neq i$.  If $M_{X'}$ is obtained by replacing the 
$j$th column of $M_X$ with $y_j+ay_i$ for some integer $a$, then $X=X'$.

\item\label{item-delete-column-Z-span} If $M_{X'}$ is obtained by deleting any column from $M_X$ which is in the $\mathbb{Z}$-span of the other columns, then $X=X'$.  (In particular, deleting a zero column does not change the graph.)

\item If $M_{X'}$ is obtained by permuting the rows of $M_X$, then $X$ is isomorphic to $X'$.

\item If $M_{X'}$ is obtained by multiplying a row of $M_X$ by $-1$, then $X$ is isomorphic to $X'$.

\end{enumerate}\end{Lem}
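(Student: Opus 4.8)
The plan is to observe that the graph $X$ attached to a Heuberger matrix $M_X$ depends on exactly two pieces of data: the ambient group $\mathbb{Z}^m$ and the subgroup $H\leq\mathbb{Z}^m$ generated by the columns of $M_X$. Indeed, the vertex set is $\mathbb{Z}^m/H$, and two cosets $H+x$ and $H+y$ are adjacent precisely when $x-y\equiv\pm e_k\pmod{H}$ for some $k$. With this reformulation in hand, the six claims split cleanly into two families according to whether the stated operation fixes $H$ or merely transports it along an automorphism of $\mathbb{Z}^m$.

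For the first four claims (the column operations) I would argue that none of them changes the subgroup $H$. Permuting the columns leaves the generating set untouched; negating a column replaces a generator $y_j$ by $-y_j$, and $\langle y_j\rangle=\langle -y_j\rangle$; adding an integer multiple of one column to another replaces $y_j$ by $y_j+ay_i$ while retaining $y_i$, and one checks $\langle y_1,\ldots,y_j+ay_i,\ldots\rangle=\langle y_1,\ldots,y_j,\ldots\rangle$ by solving for $y_j$; and deleting a column lying in the $\mathbb{Z}$-span of the others removes a redundant generator. In every case $M_{X'}$ generates the same subgroup $H$, so $\mathbb{Z}^m/H$ and the connection set $\{H\pm e_1,\ldots,H\pm e_m\}$ are literally unchanged. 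Hence $X$ and $X'$ are not merely isomorphic but equal, as claimed.

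For the last two claims (the row operations) the subgroup $H$ genuinely changes, so I would instead exhibit an explicit isomorphism. Both operations amount to left-multiplying $M_X$ by a signed permutation matrix $A\in\mathrm{GL}_m(\mathbb{Z})$: a row permutation is the automorphism $e_k\mapsto e_{\sigma(k)}$, and negating a row is the automorphism fixing all $e_j$ except $e_k\mapsto -e_k$. Such $A$ is a group automorphism of $\mathbb{Z}^m$ carrying $H$ onto the subgroup $H'=A(H)$ generated by the columns of $M_{X'}$, and so it descends to a group isomorphism $\bar A\colon\mathbb{Z}^m/H\to\mathbb{Z}^m/H'$, in particular a bijection of vertex sets. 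The decisive point is that $A$ permutes $\{\pm e_1,\ldots,\pm e_m\}$, since a signed permutation sends each $\pm e_k$ to some $\pm e_j$. Consequently $x-y\equiv\pm e_k\pmod{H}$ holds if and only if $Ax-Ay\equiv\pm e_j\pmod{H'}$, so $\bar A$ preserves adjacency in both directions, yielding $X\cong X'$.

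The steps are routine group theory; the only genuine care required is to keep the equality-versus-isomorphism distinction straight. The column operations merit the stronger conclusion $X=X'$ precisely because they fix $H$, leaving the vertex group and connection set literally identical, whereas the row operations move $H$ to a different subgroup $H'$ and so can only produce an isomorphism of two honestly distinct graphs. The other point deserving attention is the verification that $\bar A$ preserves the connection set; this is exactly where the hypothesis that the row operations are signed permutations (rather than arbitrary unimodular maps) is used, since only these stabilize the symmetric set $\{\pm e_k\}$ on which adjacency is defined.
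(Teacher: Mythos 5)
Your proof is correct and takes essentially the same approach as the paper: the paper likewise disposes of items 1--4 by noting that the column operations have no effect on the subgroup $H$ (hence $X=X'$), and of items 5--6 by observing that they correspond to permuting the basis vectors $e_j$ or reflecting a coordinate $e_j\mapsto -e_j$ --- i.e., left multiplication by a signed permutation matrix, exactly your automorphism $A$. You have simply written out in full the details that the paper dismisses as ``standard arguments,'' including the key observation that such an $A$ stabilizes the connection set $\{\pm e_1,\dots,\pm e_m\}$ and so descends to a graph isomorphism of the quotients.
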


These statements can all be proved by standard arguments.  For example, the first four items listed (the column operations) have no effect on the subgroup $H$.  The fifth operation essentially corresponds to permuting the basis vectors $e_j$; the sixth corresponds to reflecting a coordinate, i.e., mapping $e_j\mapsto -e_j$.

We remark that any finite composition of operations 1--3 is equivalent to multiplication on the right by a unimodular matrix, and any finite composition of operations 5--6 is equivalent to multiplication on the left by a signed permutation matrix.

\subsection{Basal results on chromatic numbers of abelian Cayley graphs}


\vspace{.1in}

Recall that the \emph{Cartesian product} (a.k.a. \emph{box product}) of two graphs $X$ and $Y$ with vertex sets $V(X)$ and $V(Y)$, respectively, is defined to be the graph $X\square Y$ with vertex set $V(X)\times V(Y)$, where $(x_1,y_1)$ and $(x_2,y_2)$ are adjacent if and only if either (i) $x_1=x_2$ and $y_1$ is adjacent to $y_2$ in $Y$, or else (ii) $x_1$ is adjacent to $x_2$ in $X$ and $y_1=y_2$.  Also recall (see \cite{Sabidussi}) that\begin{equation}\label{equation-chi-of-box} \chi(X\square Y)=\text{max}(\chi(X),\chi(Y)).\end{equation}

When a Heuberger matrix has a block structure, we can find the chromatic number of the associated graph by taking the maximum of the chromatic numbers of the graphs associated to the blocks.

\begin{Lem}\label{lemma-block-structure}
Suppose the standardized abelian Cayley graphs $X$ and $Y$ have Heuberger matrices $M_X$ and $M_Y$, respectively.  Define the standardized abelian Cayley graph $Z$ by \(\begin{pmatrix}M_X & 0\\
0 & M_Y\end{pmatrix}^{\emph{SACG}}_{Z}.\)  In other words, $M_Z$ is the matrix direct sum $M_X\oplus M_Y$.  Then $\chi(Z)=\text{max}(\chi(X),\chi(Y))$.\end{Lem}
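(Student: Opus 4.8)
The plan is to identify $Z$ explicitly as the Cartesian product $X\square Y$ and then simply quote the formula \eqref{equation-chi-of-box}. Everything reduces to unwinding the SACG construction for a block-diagonal matrix and checking that the resulting adjacency relation is exactly the box-product adjacency.

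First I would set up notation. Say $M_X$ is $m_X\times r_X$ and $M_Y$ is $m_Y\times r_Y$, so that $M_Z=M_X\oplus M_Y$ is $(m_X+m_Y)\times(r_X+r_Y)$. Let $H_X\leq\mathbb{Z}^{m_X}$ and $H_Y\leq\mathbb{Z}^{m_Y}$ be the column spans of $M_X$ and $M_Y$, and let $H_Z\leq\mathbb{Z}^{m_X+m_Y}$ be the column span of $M_Z$. The key structural observation is that each column of $M_Z$ is supported entirely in the first $m_X$ coordinates (for columns coming from $M_X$) or entirely in the last $m_Y$ coordinates (for columns coming from $M_Y$); hence $H_Z=H_X\oplus H_Y$ inside $\mathbb{Z}^{m_X+m_Y}=\mathbb{Z}^{m_X}\oplus\mathbb{Z}^{m_Y}$. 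Taking quotients then gives a natural isomorphism $G_Z:=\mathbb{Z}^{m_X+m_Y}/H_Z\cong(\mathbb{Z}^{m_X}/H_X)\oplus(\mathbb{Z}^{m_Y}/H_Y)=G_X\oplus G_Y$.

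Next I would track the generating set across this isomorphism. For $k\leq m_X$ the basis vector $e_k$ maps to $(\ol{e}_k,0)$ and for $k>m_X$ it maps to $(0,\ol{e}_{k-m_X})$, so the connection set $S_Z$ of $Z$ becomes $(S_X\times\{0\})\cup(\{0\}\times S_Y)$. Comparing adjacencies, $(x,y)$ and $(x',y')$ are joined in $Z$ exactly when their difference lies in $S_Z$, i.e.\ when either $y=y'$ and $x,x'$ are adjacent in $X$, or $x=x'$ and $y,y'$ are adjacent in $Y$. This is precisely the definition of adjacency in $X\square Y$, so $Z\cong X\square Y$, and \eqref{equation-chi-of-box} finishes the proof.

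The only real point to watch—the main obstacle, if one can call it that—is verifying that $S_Z$ contains no genuinely ``mixed'' generator (one with nonzero components in both blocks), since such an element would create diagonal edges and turn $Z$ into a strong or tensor product rather than a Cartesian product. The direct-sum shape of $M_Z$ is exactly what rules this out, and this is the crux of the argument. I would also dispatch the loop bookkeeping: $Z$ acquires a loop iff $0\in S_Z$, which happens iff $0\in S_X$ or $0\in S_Y$, matching the loop structure of $X\square Y$; thus \eqref{equation-chi-of-box} applies verbatim under the paper's convention that a graph with a loop cannot be properly colored.
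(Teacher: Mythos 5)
Your proposal is correct and follows exactly the paper's route: the paper's proof consists of asserting that $Z$ is isomorphic to $X\square Y$ (calling it ``straightforward'') and then citing \eqref{equation-chi-of-box}. You have simply spelled out the straightforward verification---the splitting $H_Z=H_X\oplus H_Y$, the identification of the generating set, and the loop bookkeeping---which the paper omits.
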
\begin{proof}
It is straightforward to show that $Z$ is isomorphic to $X\square Y$.  The result follows from (\ref{equation-chi-of-box}).\end{proof}

We next show that deleting a row of zeroes from a matrix has no effect on the chromatic numbers of the associated standardized abelian Cayley graphs.

\begin{Lem}\label{lemma-delete-row-all-zeroes}
Let $y_1,\dots,y_r\in \mathbb{Z}^m$, where $m\geq 2$.  For all $j=1,\dots,r$, let $y_j=(y_{1j},\dots,y_{mj})^t$.  Suppose we have the standardized abelian Cayley graph $(y_1\;\cdots\;y_r)^{\emph{SACG}}_X$.  Suppose that for some $k$ with $1\leq k\leq m$, we have that $y_{k1}=\cdots=y_{kr}=0$.  (That is, the $k$th row of $X$'s matrix has all zeroes.)  Define the standardized abelian Cayley graph $X'$ by 

\[\begin{pmatrix}y_{11} & \cdots & y_{1r}\\
\vdots & \ddots & \vdots \\
y_{k-1,1} & \cdots & y_{k-1,r}\\
y_{k+1,1} & \cdots & y_{k+1,r}\\
\vdots & \ddots & \vdots \\
y_{m1} & \cdots & y_{mr}\end{pmatrix}^{\emph{SACG}}_{X'}.\]That is, the matrix for $X'$ is the same as that for $X$, but with the $k$th row deleted.  Then $\chi(X)=\chi(X')$.\end{Lem}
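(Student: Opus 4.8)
The plan is to recognize $X$ as a Cartesian product and invoke (\ref{equation-chi-of-box}). First I would single out the $k$th coordinate, writing $\mathbb{Z}^m\cong\mathbb{Z}^{m-1}\times\mathbb{Z}$ so that the $k$th slot becomes the final $\mathbb{Z}$ factor. Because the $k$th row of $M_X$ vanishes, every generator $y_j$ of $H$ has $k$th coordinate $0$; hence $H=H'\times\{0\}$, where $H'$ is the subgroup of $\mathbb{Z}^{m-1}$ generated by the columns $y_j'$ of $M_{X'}$. Passing to quotients gives $\mathbb{Z}^m/H\cong(\mathbb{Z}^{m-1}/H')\times\mathbb{Z}$, under which the generators $H\pm e_i$ (for $i\neq k$) become $(H'\pm e_i,0)$ and $H\pm e_k$ becomes $(0,\pm 1)$. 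This is exactly the generating set of the Cartesian product, so $X\cong X'\,\square\,\text{Cay}(\mathbb{Z},\{\pm 1\})$. Equivalently, and perhaps more cleanly for the write-up, I would move the zero row to the bottom using item 5 of Lemma \ref{lemma-isomorphisms}, append a zero column (which leaves the graph unchanged, by item 4 of Lemma \ref{lemma-isomorphisms}), and thereby exhibit a Heuberger matrix of the block form $M_{X'}\oplus(0)$, to which Lemma \ref{lemma-block-structure} applies directly.

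Either route reduces the problem to the chromatic number of $\text{Cay}(\mathbb{Z},\{\pm 1\})$, which by Example \ref{example-cycle} is the doubly infinite path and hence has chromatic number $2$. Thus (\ref{equation-chi-of-box}) (or Lemma \ref{lemma-block-structure}) yields $\chi(X)=\text{max}(\chi(X'),2)$.

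The one point that needs care---and the only real obstacle---is collapsing this maximum to $\chi(X')$, which requires $\chi(X')\geq 2$. Note that the naive coordinate projection $X\to X'$ is \emph{not} a graph homomorphism, since it crushes each edge arising from the $\pm e_k$ generator to a single vertex, so one cannot simply pull colorings back. Instead I would argue directly that $X'$ always has chromatic number at least $2$: since $m\geq 2$, the matrix $M_{X'}$ has at least one row and the generator $\pm e_1$ is present, so the vertices $H'$ and $H'+e_1$ are either distinct---giving a genuine edge and forcing $\chi(X')\geq 2$---or equal, giving a loop and forcing $\chi(X')=\infty$. In both cases $\chi(X')\geq 2$, so $\text{max}(\chi(X'),2)=\chi(X')$ and the proof is complete. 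It is precisely the hypothesis $m\geq 2$ that guarantees $X'$ is nonempty and makes this final step valid.
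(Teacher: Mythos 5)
Your proposal is correct and takes essentially the same route as the paper: permute rows to move the zero row to the bottom, append a zero column via Lemma \ref{lemma-isomorphisms}(\ref{item-delete-column-Z-span}), and then apply Lemma \ref{lemma-block-structure} together with Example \ref{example-cycle} to get $\chi(X)=\max(\chi(X'),2)$. The only addition is that you make explicit the final collapse $\max(\chi(X'),2)=\chi(X')$ via the edge-or-loop dichotomy (valid because $m\geq 2$), a step the paper leaves implicit.
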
\begin{proof}By Lemma \ref{lemma-isomorphisms}, we have that $X$ is isomorphic to \[\begin{pmatrix}y_{11} & \cdots & y_{1r} & 0\\
\vdots & \ddots & \vdots & \vdots \\
y_{k-1,1} & \cdots & y_{k-1,r} & 0\\
y_{k+1,1} & \cdots & y_{k+1,r} & 0\\
\vdots & \ddots & \vdots  & \vdots\\
y_{m1} & \cdots & y_{mr} & 0\\
0 & \cdots & 0 & 0\end{pmatrix}^{\text{SACG}}.\]Here we permute the rows of $X$ so as to move the row of all zeroes to the bottom, then append a column of all zeroes on the right using Lemma \ref{lemma-isomorphisms}(\ref{item-delete-column-Z-span}).  The result now follows from Lemma \ref{lemma-block-structure} and Example \ref{example-cycle}.\end{proof}


\subsection{Graph homomorphisms for standardized abelian Cayley graphs}

Given two graphs $X$ and $Y$ with vertex sets $V_X$ and $V_Y$, respectively, we recall that a \emph{graph homomorphism} is a mapping $\psi\colon V_X\to V_Y$ such that if $v$ and $w$ are adjacent vertices in $X$, then $\psi(v)$ and $\psi(w)$ are adjacent vertices in $Y$.  As is well known, a proper coloring $c$ of $Y$ can be ``pulled back'' via $\psi$ to give the proper coloring $c\circ\psi$ of $X$.  Hence we obtain the following standard lemma.

\begin{Lem}\label{lemma-pullback}
Suppose $\psi$ is a graph homomorphism from $X$ to $Y$.  Then $\chi(X)\leq \chi(Y)$.\end{Lem}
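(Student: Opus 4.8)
The plan is to exploit the pullback construction that the surrounding text has already flagged, so the proof is essentially immediate. First I would dispose of the degenerate case: the inequality holds vacuously unless $Y$ admits a proper coloring with finitely many colors, so I may assume $\chi(Y)$ is finite and fix a proper coloring $c\colon V_Y\to C$ where $C$ is a set of colors with $|C|=\chi(Y)$.

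Next I would define the candidate coloring of $X$ to be the composite $c\circ\psi\colon V_X\to C$ and check that it is proper. The only thing to verify is that adjacent vertices of $X$ receive distinct colors. So suppose $v$ and $w$ are adjacent in $X$. By the definition of a graph homomorphism, $\psi(v)$ and $\psi(w)$ are adjacent in $Y$; since $c$ is a proper coloring of $Y$, this forces $c(\psi(v))\neq c(\psi(w))$, that is, $(c\circ\psi)(v)\neq(c\circ\psi)(w)$. Hence $c\circ\psi$ is a proper coloring of $X$ using at most $|C|=\chi(Y)$ colors, and therefore $\chi(X)\leq\chi(Y)$.

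The argument has essentially no obstacle, being the standard functoriality of the chromatic number under homomorphisms. The one point deserving care, given that this paper permits loops, is the degenerate case I set aside at the start: a loop at a vertex $v\in V_X$ is an edge from $v$ to itself, so a homomorphism forces a loop at $\psi(v)$ in $Y$, whence $Y$ is also not properly colorable and both sides equal $\infty$. Thus the inequality holds in every case once we adopt the convention that a graph containing a loop has chromatic number $\infty$, and I would state this convention explicitly so that the claim remains unambiguous.
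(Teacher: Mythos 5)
Your proof is correct and takes essentially the same approach as the paper, which justifies this lemma precisely by observing that a proper coloring $c$ of $Y$ pulls back via $\psi$ to the proper coloring $c\circ\psi$ of $X$. Your extra care about loops and non-colorable graphs is consistent with the paper's conventions (a graph with a loop cannot be properly colored) but does not change the argument.
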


With Cayley graphs, it is well known that certain group homomorphisms are graph homomorphisms.  To be precise, let $G_1$ and $G_2$ be groups, and let $S_1$ and $S_2$ be symmetric subgroups of $G_1$ and $G_2$, respectively.  Let $\psi\colon G_1\to G_2$ be a group homomorphism such that $\psi(S_1)\subset S_2$.  It follows immediately from the various definitions that $\psi$ is a graph homomorphism from $\text{Cay}(G_1,S_1)$ to $\text{Cay}(G_2,S_2)$.

Suppose $X=\text{Cay}(\mathbb{Z}^m/H,S)$ and $Y=\text{Cay}(\mathbb{Z}^\ell/J,T)$ are standardized abelian Cayley graphs, where $S=\{H\pm e_1,\dots,H\pm e_m\})$ and $T=\{J\pm e_1,\dots,J\pm e_\ell\}$.  (Note that we somewhat ambiguously use the notation $e_j$ here both for a standard basis vector in $\mathbb{Z}^m$ as well as for one in in $\mathbb{Z}^\ell$, and this will be our usual practice going forward.  We hope that in each case the context will make clear which set $e_j$ is an element of.)  Let $M_X$ and $M_Y$ be Heuberger matrices associated to $X$ and $Y$, respectively.  To indicate that $\tau\colon \mathbb{Z}^m\to \mathbb{Z}^\ell$ is a group homomorphism with $\tau(S)\subset T$, we introduce the following notation: $$(M_X)^{\text{SACG}}_X\xrightarrow[\tau]{\ocirc}(M_Y)^{\text{SACG}}_Y.$$  It is often the case that the mapping $\tau$ can be inferred from the context, and for that reason we frequently omit it.

From the Heuberger matrices associated to standardized abelian Cayley graphs, we can easily construct an assortment of graph homomorphisms.  Any group homomorphism $\tau$ from $\mathbb{Z}^m/H$ to $\mathbb{Z}^m/J$ is uniquely determined by the images of $e_1,\dots,e_m$.  (By an abuse of notation, we write $e_j$ here when we more properly should write the coset $H+e_j$.  We shall adopt this convention from now on and hope that no confusion will result.)  Let $y_j$ be the $j$th column of $M_X$.  The mapping is well-defined if and only if \begin{equation}\label{equation-column-condition}y_{1j}\tau(e_1)+\cdots+y_{mj}\tau(e_m)\in J\text{ for all } j=1,\dots,r.\end{equation}  Moreover, the requirement that $\tau$ be a graph homomorphism implies that\begin{equation}\label{equation-generator-condition}\text{for all }i=1,\dots,m,\text{ we have that }\tau(e_i)=\pm e_k\text{ for some }k=1,\dots,\ell.\end{equation}  Conversely, one can verify that any function $\tau$ on $\{e_1,\dots,e_m\}$ satisfying (\ref{equation-column-condition}) and (\ref{equation-generator-condition}) can be extended to a well-defined graph homomorphism.  The following lemma catalogues several standard graph homomorphisms we obtain in this manner.

\begin{Lem}\label{lemma-homomorphisms}\;

\begin{enumerate}

\item Every isomorphism from Lemma \ref{lemma-isomorphisms} defines a graph homomorphism.

\item\label{lemma-homomorphisms-reduce-factor} We obtain a graph homomorphism by reducing a column by a common factor.  To be precise, for any integer $a$, any  $y_1,\dots,y_r\in \mathbb{Z}^m$, and any $j=1,\dots,r$, we have \[
\begin{pmatrix}y_{11} & \cdots & ay_{1j} & \cdots & y_{1r}\\
\vdots & \ddots & \vdots & \ddots & \vdots \\
y_{m1} & \cdots & ay_{mj} & \cdots &  y_{mr}\end{pmatrix}^{\text{SACG}}
    \xrightarrow{\ocirc}
    \begin{pmatrix}y_{11} & \cdots & y_{1j} & \cdots & y_{1r}\\
\vdots & \ddots & \vdots & \ddots & \vdots \\
y_{m1} & \cdots & y_{mj} & \cdots &  y_{mr}\end{pmatrix}^{\text{SACG}}.
\]  Here the mapping is given by $e_j\mapsto e_j$.

\item\label{lemma-homomorphisms-add-rows} We obtain a graph homomorphism when we ``collapse'' the top two rows by adding them.  We can write this in terms of the associated matrices as follows:\[
\begin{pmatrix}y_{11} & \cdots & y_{1r}\\
y_{21} & \cdots & y_{2r}\\
\vdots & \ddots & \vdots \\
y_{m1} & \cdots & y_{mr}\end{pmatrix}^{\text{SACG}}_X
    \xrightarrow{\ocirc}
    \begin{pmatrix}
    y_{11}+y_{21} & \cdots & y_{1r}+y_{2r}\\
\vdots & \ddots & \vdots \\
y_{m1} & \cdots & y_{mr}
    \end{pmatrix}^{\text{SACG}}_Y.
\]That is, the top row of $M_Y$ equals the sum of top two rows of $M_X$.  Here the mapping is given by $e_1, e_2\mapsto e_1$ and $e_j\mapsto e_{j-1}$ for all $j\geq 3$.

\item\label{lemma-homomorphisms-append-columns} We obtain a graph homomorphism by appending an arbitrary column, then mapping $e_j\mapsto e_j$ for all $j$.  In other words, for all any  $y_1,\dots,y_{r+1}\in \mathbb{Z}^m$, we have \[
\begin{pmatrix}y_{11} & \cdots & y_{1r}\\
\vdots & \ddots & \vdots \\
y_{m1} & \cdots & y_{mr}\end{pmatrix}^{\text{SACG}}
    \xrightarrow{\ocirc}
    \begin{pmatrix}y_{11} & \cdots & y_{1r} & y_{1,r+1}\\
\vdots & \ddots & \vdots & \vdots \\
y_{m1} & \cdots & y_{mr} & y_{m,r+1}\end{pmatrix}^{\text{SACG}}.
\]

\item\label{lemma-homomorphisms-append-zero-row} We obtain a graph homomorphism by appending a row of all zeroes, then mapping $e_j\mapsto e_j$ for $j=1,\dots,m$.  In other words, for all any  $y_1,\dots,y_{r}\in \mathbb{Z}^m$, we have \[
\begin{pmatrix}y_{11} & \cdots & y_{1r}\\
\vdots & \ddots & \vdots \\
y_{m1} & \cdots & y_{mr}\end{pmatrix}^{\text{SACG}}
    \xrightarrow{\ocirc}
    \begin{pmatrix}y_{11} & \cdots & y_{1r}\\
\vdots & \ddots & \vdots \\
y_{m1} & \cdots & y_{mr}\\
0 & \cdots & 0\end{pmatrix}^{\text{SACG}}.
\]

\item\label{lemma-homomorphisms-composition} Any composition of the above graph homomorphisms is again a graph homomorphism.
    
\end{enumerate}

\end{Lem}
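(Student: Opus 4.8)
The plan is to verify each item directly against the two conditions (\ref{equation-column-condition}) and (\ref{equation-generator-condition}) established immediately before the statement, which together guarantee that a map prescribed on the generators $e_1,\dots,e_m$ extends to a genuine graph homomorphism. For every item but the last, this reduces to three routine checks: exhibit the claimed $\tau$ on basis vectors, confirm that each $\tau(e_i)$ has the form $\pm e_k$ (condition (\ref{equation-generator-condition})), and confirm that $\tau$ carries each column of $M_X$ into the subgroup $J$ defining the target graph (condition (\ref{equation-column-condition})).

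Item 1 is immediate, since an isomorphism is in particular a homomorphism: each of the six operations of Lemma \ref{lemma-isomorphisms} yields a graph homomorphism (the identity map when $X=X'$, the stated isomorphism otherwise). For item 2 I would take $\tau$ to be the identity on basis vectors, $e_i\mapsto e_i$, so that (\ref{equation-generator-condition}) is trivial; for (\ref{equation-column-condition}) every column other than the $j$th is shared by source and target and hence lies in $J$, while the $j$th source column $ay_j$ maps to $ay_j$, which lies in $J$ because $y_j$ is a generator of $J$. Items 4 and 5 are handled the same way: in item 4 one again takes $e_i\mapsto e_i$ and notes that each source column $y_k$ is literally one of the generators of the enlarged $J$; in item 5 one takes the standard inclusion of $\mathbb{Z}^m$ into $\mathbb{Z}^{m+1}$ given by $e_j\mapsto e_j$, under which each source column $y_k$ maps to $(y_{1k},\dots,y_{mk},0)^t$, a generator of $J$.

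The one item requiring slightly more care is item 3, where $\tau$ is not the identity on basis vectors but rather $e_1,e_2\mapsto e_1$ and $e_j\mapsto e_{j-1}$ for $j\geq 3$. Condition (\ref{equation-generator-condition}) still holds since each image is $+e_k$. For (\ref{equation-column-condition}) I would compute, for the $k$th column, the quantity $\sum_{i=1}^m y_{ik}\tau(e_i)=(y_{1k}+y_{2k})e_1+y_{3k}e_2+\cdots+y_{mk}e_{m-1}$ and recognize it as precisely the $k$th column of $M_Y$, hence an element of $J$. Finally, item 6 is the standard fact that a composite of adjacency-preserving maps is adjacency-preserving, which follows directly from the definition of graph homomorphism and needs no reference to the matrix formalism.

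I expect no genuine obstacle here; the content is entirely routine verification against the criterion preceding the lemma. The only place demanding attention is the bookkeeping in item 3, where the collapsing map mixes two basis vectors, so one must confirm that it is the \emph{sum} of the top two rows—and not the individual rows—that appears in the target matrix.
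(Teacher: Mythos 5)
Your proposal is correct and matches the paper's intended argument: the paper introduces conditions (\ref{equation-column-condition}) and (\ref{equation-generator-condition}) precisely so that the lemma follows by routine verification of both conditions for each explicitly given map $\tau$ (the paper states the lemma ``catalogues several standard graph homomorphisms we obtain in this manner'' and leaves the checks implicit). Your item-by-item verification, including the only nontrivial bookkeeping in item \ref{lemma-homomorphisms-add-rows} where the image of each source column is recognized as the corresponding column of $M_Y$, is exactly that verification.
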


We frequently compose ``permute rows'' and ``multiply a row by $-1$'' isomorphisms with a ``sum the top two rows'' homomorphism.  Each row of the ``target matrix'' is then a sum of rows, or their negatives, from the ``source matrix.''

\subsection{Graph homomorphisms and chromatic numbers of standardized abelian Cayley graphs}\label{subsetction-homomorphisms}

The homomorphisms of the previous subsection, together with Lemma \ref{lemma-pullback}, will help us find bounds on chromatic numbers in many cases.  As a first example of this technique, we observe that we can immediately determine from an associated Heuberger matrix whether a standardized abelian Cayley graph is bipartite.

\begin{Lem}\label{lemma-bipartite}
Let $y_1,\dots,y_r\in \mathbb{Z}^m$.  For all $j=1,\dots,r$, let $y_j=(y_{1j},\dots,y_{mj})^t$.  Consider the graph $(y_1\;\cdots\;y_r)^{\text{SACG}}_X$.  Then $\chi(X)=2$ if and only if $s_j=y_{1j}+\cdots+y_{mj}$ is even for all $j$.  In other words, $X$ is bipartite if and only if all column sums are even.
\end{Lem}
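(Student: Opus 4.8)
The plan is to recognize that $X$ is bipartite precisely when it admits a graph homomorphism onto the single edge $(2)^{\text{SACG}}\cong K_2$, and to detect when such a homomorphism exists directly from the column sums. One direction produces the homomorphism explicitly; the other shows that connectivity forces any proper $2$-coloring to be the ``coordinate-sum parity'' coloring, whose existence in turn demands even column sums.

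For the implication ``all $s_j$ even $\Rightarrow\chi(X)=2$'', I would consider the homomorphism $\tau\colon\mathbb{Z}^m\to\mathbb{Z}$ (so $\ell=1$) determined by $\tau(e_i)=e_1$ for every $i$, with target graph $(2)^{\text{SACG}}$, for which $J=2\mathbb{Z}$. Condition (\ref{equation-generator-condition}) holds trivially, and condition (\ref{equation-column-condition}) reads $s_j e_1\in 2\mathbb{Z}$ for all $j$, which is exactly the hypothesis that every column sum is even. Thus $\tau$ descends to a graph homomorphism $X\to(2)^{\text{SACG}}$, and since $(2)^{\text{SACG}}\cong K_2$ has chromatic number $2$ by Example \ref{example-cycle}, Lemma \ref{lemma-pullback} gives $\chi(X)\le 2$. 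To see that $\chi(X)$ is not $1$, reduce $\tau$ modulo $2$ to obtain $\bar\tau\colon\mathbb{Z}^m\to\mathbb{Z}_2$ with $\bar\tau(e_i)=1$; the hypothesis is equivalent to $H\subseteq\ker\bar\tau$, so no $e_i$ lies in $H$ (else $1=\bar\tau(e_i)=0$), meaning $X$ has no loops and the distinct vertices $H$ and $H+e_1$ are adjacent. Hence $\chi(X)=2$.

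For the converse, suppose $\chi(X)=2$ and fix a proper $2$-coloring $c\colon V_X\to\{0,1\}$. A loop would make proper coloring impossible, so no $e_i$ lies in $H$; consequently each step by $\pm e_i$ moves to a genuinely adjacent vertex and flips the value of $c$. Since $S$ generates, $X$ is connected, so I can join the identity vertex $H$ to an arbitrary vertex $H+v$ by the walk that takes $|v_i|$ steps in the direction $e_i$ or $-e_i$ (according to the sign of $v_i$) for each $i$; the color alternates along its $\sum_i|v_i|$ steps, so $c(H+v)=c(H)+\sum_i v_i\pmod 2$ because $\sum_i|v_i|$ and $\sum_i v_i$ have the same parity. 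Taking $v=y_j$, which satisfies $H+y_j=H$, forces $c(H)=c(H)+s_j\pmod 2$, so each $s_j$ is even.

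I expect the only delicate point to be the converse step asserting that an arbitrary proper $2$-coloring must agree with the coordinate-sum parity: this uses connectivity to reach every coset from $H$ and uses the hypothesis $\chi(X)=2$ (equivalently, the absence of loops) to guarantee that each generator step genuinely changes the color. The remainder is a routine translation of ``even column sums'' into the well-definedness criterion (\ref{equation-column-condition}) for the homomorphism onto $K_2$.
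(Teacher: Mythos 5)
Your proof is correct and takes essentially the same route as the paper's: the forward direction pulls back a $2$-coloring along the coordinate-sum homomorphism onto $(2)^{\text{SACG}}\cong K_2$ (the paper builds this same map by summing all rows and reducing columns, with a separate case for all sums zero that your direct verification avoids), and your converse is the same parity-of-a-closed-walk argument that the paper phrases as exhibiting an odd cycle. If anything, your version is slightly more careful than the paper's, since you explicitly rule out loops (no $e_i$ lies in $H$) before concluding $\chi(X)=2$.
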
\begin{proof}
First, suppose all column sums are even.  If $s_j\neq 0$ for at least one value of $j$, then we have $$(y_1\;\cdots\;y_r)^{\text{SACG}}_X\xrightarrow{\ocirc}(s_1\;\cdots\;s_r)^{\text{SACG}}\xrightarrow{\ocirc}(2\;\cdots\;2\;0\;\cdots\;0)^{\text{SACG}}=(2)^{\text{SACG}}_Y.$$  
Here the first homomorphism comes from summing all of the rows, and the second comes from permuting columns to move all zeroes to the right, then reducing each nonzero column by a factor.  All columns of $(2\;\cdots\;2\;0\;\cdots\;0)$ are in the $\mathbb{Z}$-span of the first column, whence we achieve the final equality by deleting all columns except the first column.  The graph $Y$ is a path of length $1$, hence $2$-colorable.  If $s_1=\cdots=s_r=0$, then in a similar way we have a homomorphism to $(0)^{\text{SACG}}_Y$, which is a doubly infinite path graph, hence also $2$-colorable.  By Lemma \ref{lemma-pullback}, we have that $\chi(X)\leq 2$.  Since $X$ contains at least one edge, therefore $\chi(X)=2$.

Conversely, suppose that $s_j$ is odd for some $j$.  Starting at any vertex, we obtain an odd cycle by taking $y_{1j}$ steps along $H+e_1$, then $y_{2j}$ steps along $H+e_2$, and so on, finally taking $y_{mj}$ steps along $H+e_m$.  (By ``taking a step along $q$,'' we mean moving from $v$ to $v+q$.  If $y_{kj}$ is negative, then by ``taking $y_{kj}$ steps along $H+e_k$,'' we mean taking $-y_{kj}$ steps along $H-e_k$.)
\end{proof}

A nearly identical proof shows that whenever the column sums are not relatively prime, the graph is $3$-colorable.

\begin{Lem}\label{lemma-column-sums-not-coprime}
Let $y_1,\dots,y_r\in \mathbb{Z}^m$.  For all $j=1,\dots,r$, let $y_j=(y_{1j},\dots,y_{mj})^t$, and let $s_j=y_{1j}+\cdots+y_{mj}$.  Suppose we have $(y_1\;\cdots\;y_r)^{\text{SACG}}_X$.  Suppose $s_j\neq 0$ for some $j$.  If $e=\gcd(s_1,\dots,s_r)>1$, then $\chi(X)\leq 3$.\end{Lem}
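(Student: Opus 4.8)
The plan is to imitate the proof of Lemma~\ref{lemma-bipartite} almost verbatim, with the integer $e$ taking over the role played there by the integer $2$. The idea is to build a chain of graph homomorphisms from $X$ down to the single-column graph $(e)^{\text{SACG}}_Y = \text{Cay}(\mathbb{Z}_e,\{\pm 1\})$, and then to observe that this target graph, being a cycle (or a short path), is always $3$-colorable, so the bound transfers back via Lemma~\ref{lemma-pullback}.

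First I would sum all of the rows of $M_X$, which yields a graph homomorphism $(y_1\;\cdots\;y_r)^{\text{SACG}}_X \xrightarrow{\ocirc} (s_1\;\cdots\;s_r)^{\text{SACG}}$ by composing the ``collapse the top two rows'' homomorphism of Lemma~\ref{lemma-homomorphisms}(\ref{lemma-homomorphisms-add-rows}) a total of $m-1$ times and invoking Lemma~\ref{lemma-homomorphisms}(\ref{lemma-homomorphisms-composition}). Next, after moving all zero columns to the right, I would use that $e \mid s_j$ for every $j$: each nonzero column $s_j$ can be written as $(s_j/e)\cdot e$, so reducing that column by the common factor $s_j/e$ via Lemma~\ref{lemma-homomorphisms}(\ref{lemma-homomorphisms-reduce-factor}) replaces it with $e$. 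Applying this to every nonzero column produces a homomorphism onto $(e\;\cdots\;e\;0\;\cdots\;0)^{\text{SACG}}$. Every column of this matrix lies in the $\mathbb{Z}$-span of the first column, which is nonzero because $s_j \neq 0$ for some $j$; hence by Lemma~\ref{lemma-isomorphisms}(\ref{item-delete-column-Z-span}) I may delete all but the first column to obtain $(e)^{\text{SACG}}_Y$.

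It then remains only to bound $\chi(Y)$. By Lemma~\ref{lemma-pullback}, the composite homomorphism gives $\chi(X) \leq \chi(Y)$, so it suffices to show $\chi(Y) \leq 3$. Since $e = \gcd(s_1,\dots,s_r) > 1$ we have $e \geq 2$, and $Y = \text{Cay}(\mathbb{Z}_e,\{\pm 1\})$ is, by Example~\ref{example-cycle}, a path of length $1$ when $e = 2$ and an $e$-cycle when $e \geq 3$; in either case $\chi(Y) \leq 3$.

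I do not anticipate any genuine obstacle: the entire content is homomorphism bookkeeping identical in spirit to Lemma~\ref{lemma-bipartite}, and this is exactly why the paper can dispatch it with the one-line remark that ``a nearly identical proof'' applies. The only point requiring a moment's care is the final colorability claim, where one should note that the parity of $e$ is irrelevant---an even cycle is $2$-colorable and an odd cycle is $3$-colorable, but both satisfy the desired bound---so that no case analysis on $e$ is actually needed.
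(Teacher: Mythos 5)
Your proof is correct and takes essentially the same route as the paper's: collapse all rows into the row of column sums, reduce each nonzero column by its common factor to reach $(e\;\cdots\;e\;0\;\cdots\;0)^{\text{SACG}}=(e)^{\text{SACG}}_Y$, and pull back a proper coloring of $Y$ via Lemma~\ref{lemma-pullback}. If anything, you are slightly more careful than the paper, whose proof calls $Y$ ``a cycle of length $e\geq 3$'' and thus silently passes over the case $e=2$, where $Y$ is a path of length $1$ and the bound holds trivially, exactly as you note.
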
\begin{proof}
We have $$(y_1\;\cdots\;y_r)^{\text{SACG}}_X\xrightarrow{\ocirc}(s_1\;\cdots\;s_r)^{\text{SACG}}\xrightarrow{\ocirc}(e\;\cdots\;e\;0\;\cdots\;0)^{\text{SACG}}=(e)^{\text{SACG}}_Y.$$  
The graph $Y$ is a cycle of length $e\geq 3$, hence $3$-colorable.  By Lemma \ref{lemma-pullback}, we have that $\chi(X)\leq 3$.
\end{proof}

We require in the preceding lemma that at least one of the column sums is not zero so as to guarantee that $e$ is defined.  If all column sums are zero, then $\chi(X)=2$ by Lemma \ref{lemma-bipartite}.


\vspace{.1in}

Let $\omega\in\mathbb{C}$.  Recall that the \emph{minimal polynomial of $\omega$ over the integers}, denoted $\text{min}_\mathbb{Z}\,\omega$, is defined as $\text{min}_\mathbb{Z}\,\omega=k\,\text{min}_\mathbb{Q}\,\omega$, where $\text{min}_\mathbb{Q}\,\omega$ is the minimal polynomial of $\omega$ over the rational numbers, and $k$ is the smallest positive integer such that $k\,\text{min}_\mathbb{Q}\,\omega$ has integer coefficients.

\begin{Ex}\label{example-chi-of-plane-root-39}
In this example, we compute the chromatic number of a certain infinite unit-distance graph in the plane.  Let $\omega=\left(\frac{5}{8},\frac{\sqrt{39}}{8}\right)\in\mathbb{R}^2.$  Observe that $\omega$ is a unit vector in $\mathbb{R}^2.$  Equivalently, identifying $\mathbb{R}^2$ with the complex plane $\mathbb{C}$ so that $\omega=\frac{5}{8}+\frac{i\sqrt{39}}{8}$, we have that $|\omega|=\left|\frac{5}{8}+\frac{i\sqrt{39}}{8}\right|=1$.  Hence $\omega^2$ and $\omega^3$ are also unit vectors.  Regarding $\mathbb{R}^2$ as a group under addition, let $G$ be the subgroup generated by $\{1, \omega, \omega^2,\omega^3\}$, and let $W=\text{Cay}(G,\{\pm 1, \pm\omega, \pm\omega^2,\pm\omega^3\}).$  So $W$ is a unit-distance graph.  One can compute that $\text{min}_\mathbb{Z}\,\omega=4x^2-5x+4$.  Using this fact, with some calculation it can be shown that $W$ is isomorphic to the standardized abelian Cayley graph $$\begin{pmatrix}
4 & 0\\
-5 & 4\\
4 & -5\\
0 & 4
\end{pmatrix}^{\text{SACG}}_X.$$  Both columns of $M_X$ sum to $3$.  Therefore, by Lemmas \ref{lemma-bipartite} and \ref{lemma-column-sums-not-coprime}, we have that $\chi(W)=\chi(X)=3$.\hfill  $\square$\end{Ex}

\begin{Ex}\label{example-minimal-polynomial}Generalizing the previous example reveals a surprising connection between chromatic numbers and minimal polynomials.  Let $\omega\in\mathbb{C}$ be an algebraic number, not necessarily of unit modulus.  Let $$p(x)=\text{min}_\mathbb{Z}\,\omega=c_d x^d+\cdots + c_1 x + c_0,$$where $d$ is the degree of $p$.  Define a graph $W$ whose vertex set is $\mathbb{C}$, where two vertices are adjacent if and only if their difference equals $\omega^n$ for some nonnegative integer $n$.  We claim that if $\chi(X)\geq 4$, then $|p(1)|=|p(-1)|=1$.

To prove this claim, first observe that $W$ is precisely $\text{Cay}(\mathbb{C},S)$, where $S=\{\omega^n\;|\;n\in\mathbb{Z}\text{ and }n\geq 0\}$.  As discussed in the introduction, it follows from the de Bruijn-Erd\H{o}s theorem that if $W$ is finitely colorable, then for sufficiently large $m$ we have that $\chi(W)=\chi(X)$, where $S_m=\{\omega^n\;|\;n\in\mathbb{Z}\text{ and }0\leq n\leq m-1\}$, $G$ is the subgroup of $\mathbb{C}$ generated by $S_m$, and $X=\text{Cay}(G,S_m)$.  A straightforward induction proof shows that $X$ is isomorphic to the standardized abelian Cayley graph $Y$ with a Heuberger matrix \[M_Y=\begin{pmatrix}
c_0 & c_1 & \cdots & c_d & 0 & 0 & \cdots & 0 & \cdots & 0\\
0 & c_0 & \cdots & c_{d-1} & c_d & 0 & \cdots & 0 & \cdots & 0\\
\vdots & \vdots & \ddots & \vdots & \vdots & \vdots & \ddots & \vdots & \ddots & \vdots\\
0 & 0 & \cdots & 0 & 0 & 0 & \cdots & c_0 & \cdots & c_d\\
\end{pmatrix}^t.\]Each column sum of $M_Y$ equals $p(1)$.  Hence we must have $|p(1)|=1$, else $\chi(Y)\leq 3$ by Lemma \ref{lemma-column-sums-not-coprime}.  Similarly, by Lemma \ref{lemma-isomorphisms}, we can multiply every other row of $M_Y$ by $-1$, giving us a matrix for an isomorphic graph.  Each column sum of this new graph equals $p(-1)$, and the result follows as before.

An identical argument shows that if $|p(1)|\neq 1$ or $|p(-1)|\neq 1$, then $\chi(X)\leq 3$ for all $m$.  

We do not know whether this result is vacuous.  Hence we pose the following question: Does there exist an algebraic number $\omega\in\mathbb{C}$ such that the Cayley graph of $\mathbb{C}$ generated by the non-negative powers of $\omega$ is not $3$-colorable?\hfill$\square$\end{Ex}

\vspace{.1in}

The number of columns in a Heuberger matrix for one of our graphs can be taken to be the rank $r$ of the subgroup $H$, that is, the cardinality of a minimal generating set for $H$.  The difficulty of determining the chromatic number seems to increase as $r$ increases.  In this paper we consider cases where $r$ is small.  The cases $r=0$ and $r=1$ are fairly straightforward; the case $r=2$ is dealt with in \cite{Cervantes-Krebs-small-cases} for $m=2$ and $m=3$; and the cases $r=2, m\geq 4$ as well as $r\geq 3$ are left for future investigation.  (Here $m$ is the dimension, i.e., the number of rows in the matrix.)

When $r=0$, we have that $H$ is the trivial subgroup, so $M_X$ is a zero matrix.  In this case we have $\chi(X)=2$, by Lemma \ref{lemma-bipartite}.

The case of $r=1$, i.e., when $M_X$ has just one column, is handled by the so-called ``Tomato Cage Theorem.''   The reason for the name of this theorem is as follows.  When $r=1$ and $m=2$, we can visualize the corresponding graph by taking the infinite grid graph with vertex set $\mathbb{Z}^2$ and ``wrapping it around itself'' to form a cylindrical mesh, which reminded the authors of a tomato cage.  (It also looks a bit like a tree guard, so the term ``Tree Guard Theorem'' is applied elsewhere.)

\begin{Thm}[Tomato Cage Theorem]\label{theorem-tree-guard}
Let $y_1=(y_{11},\dots,y_{m1})^t\in\mathbb{Z}^m$, and define the standardized abelian Cayley graph $X$ by $(y_1)_X^\text{SACG}$.  If $y_1=\pm e_j$ for some $j$, then $X$ has loops and hence cannot be properly colored.  Otherwise,  \[\chi(X)=\begin{cases}

2 & \text{ if }y_1\text{ has an even number of odd entries}\\

3 & \text{ if }y_1\text{ has an odd number of odd entries.}\end{cases}\]
\end{Thm}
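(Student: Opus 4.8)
The plan is to separate the argument into the loop case and two parity regimes, exploiting that with a single column the relevant subgroup is simply $H=\langle y_1\rangle=\mathbb{Z}y_1$. First I would dispose of the loop case. A loop occurs exactly when the identity coset $H$ lies in the generating set, i.e.\ when $e_j\in H$ for some $j$. Since $H=\mathbb{Z}y_1$, the containment $e_j\in\langle y_1\rangle$ forces $y_1=\pm e_j$ (compare the single nonzero entry of $e_j$), and conversely $y_1=\pm e_j$ gives $e_j\in H$; this proves the first assertion. From here on I assume $y_1\neq\pm e_j$ for every $j$. The same computation shows no $e_i$ lies in $H$, so each generator yields a genuine (non-loop) edge, and in particular $X$ has at least one edge.

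Next I would translate the parity hypothesis: the number of odd entries of $y_1$ is even if and only if the column sum $s_1=y_{11}+\cdots+y_{m1}$ is even. Hence in the \emph{even} case Lemma~\ref{lemma-bipartite} applies directly and gives $\chi(X)=2$ (the edge produced above rules out $\chi(X)=1$). In the \emph{odd} case, $s_1$ is odd, so again by Lemma~\ref{lemma-bipartite} the graph is not bipartite; since it has an edge, we obtain the lower bound $\chi(X)\geq 3$. It remains to prove the matching upper bound $\chi(X)\leq 3$ whenever $s_1$ is odd.

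For the upper bound, the natural move is the ``sum all rows'' homomorphism $(y_1)^{\text{SACG}}_X\xrightarrow{\ocirc}(s_1)^{\text{SACG}}$ used in Lemmas~\ref{lemma-bipartite} and~\ref{lemma-column-sums-not-coprime}: if $|s_1|\geq 3$, then $(s_1)^{\text{SACG}}$ is an odd cycle (Example~\ref{example-cycle}), which is $3$-colorable, so Lemma~\ref{lemma-column-sums-not-coprime} gives $\chi(X)\leq 3$ at once. The only obstruction is the case $|s_1|=1$, where this target degenerates to a single looped vertex and the homomorphism carries no information. This is the step I expect to be the main obstacle.

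To handle $|s_1|=1$ I would exploit the row-sign-flip isomorphisms of Lemma~\ref{lemma-isomorphisms}: negating the $i$th row leaves the graph (up to isomorphism) and the parity of $s_1$ unchanged, while replacing $s_1$ by $s_1-2y_{i1}$. After negating the whole column if necessary (an identical-graph operation by Lemma~\ref{lemma-isomorphisms}), I may assume $s_1=1$. If every entry $y_{i1}$ lay in $\{0,1\}$, then a column sum of $1$ would force exactly one entry equal to $1$ and the rest equal to $0$, i.e.\ $y_1=e_j$---the excluded loop case. Hence some entry satisfies $y_{i1}\geq 2$ or $y_{i1}\leq -1$, and flipping that row produces an isomorphic graph whose column sum $1-2y_{i1}$ is odd with absolute value at least $3$. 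Applying the previous paragraph to this graph yields $\chi(X)\leq 3$, completing the odd case and the theorem. The crux, then, is recognizing that the failure of the column-sum homomorphism is governed by exactly the same condition ($y_1=\pm e_j$) that defines the loop case, so a single row flip always rescues the argument.
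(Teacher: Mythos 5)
Your proof is correct and follows essentially the same route as the paper's: both handle the loop case identically, both get the even case and the odd-case lower bound from Lemma~\ref{lemma-bipartite}, and both prove the odd-case upper bound by composing row-negation isomorphisms from Lemma~\ref{lemma-isomorphisms} with the sum-all-rows homomorphism to land on an odd cycle of length at least $3$, using $y_1\neq\pm e_j$ to rule out the degenerate target. The only difference is bookkeeping: the paper negates every negative row at the outset, so the collapsed sum is $|y_{11}|+\cdots+|y_{m1}|>1$ automatically, whereas you sum first and rescue the one bad case $|s_1|=1$ with a single row flip.
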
\begin{proof}It follows immediately from the definitions that if $y_1=\pm e_j$ for some $j$, then $X$ has loops.  Suppose then that $y_1\neq\pm e_j$ for all $j$.  Let $s=|y_{11}|+\cdots+|y_{m1}|$.  Observe that $s$ is even if and only if $s_1=y_{11}+\cdots+y_{m1}$ is even if and only if $y_1$ has an even number of odd entries.  So by Lemma \ref{lemma-bipartite}, we have that $\chi(X)=2$ if and only if $y_1$ has an even number of odd entries.  If not, then $s$ is odd.  Also, because $y_1\neq\pm e_j$ for all $j$, we have that $s>1$.  We have \[
\begin{pmatrix}y_{11}\\
\vdots\\
y_{m1}\end{pmatrix}^{\text{SACG}}_X
    \xrightarrow{\ocirc}
    \begin{pmatrix}|y_{11}|\\
\vdots\\
|y_{m1}|\end{pmatrix}^{\text{SACG}}\xrightarrow{\ocirc}(s)^{\text{SACG}}_Y.
\]  The first homomorphism comes from multiplying rows by $-1$ as needed; the second comes from summing all of the rows.  Note that $Y$ is a cycle of length $s$, hence $3$-colorable.  The result follows from Lemma \ref{lemma-pullback}.\end{proof}

\begin{Ex}\label{example-ees} The following essentially appears as Theorem 10 in \cite{EES}.
Let $a,b$ be coprime positive integers and let $X$ be the integer distance graph on $\mathbb Z$ with respect to $D = \{ a, b \}$.
That is, $X = \textup{Cay}(\mathbb Z, \{ \pm a, \pm b \})$.  Let $x = (-b, a)^t$.
Then $\textup{Cay}(\mathbb Z^2 / \langle x \rangle, \{ \langle x \rangle \pm e_1, \langle x \rangle \pm e_2 \}) \cong X$, as can be seen by $e_1\mapsto a, e_2\mapsto b$.
Applying the Tomato Cage Theorem, we see that $\chi(G) = 2$ if and only if $a$ and $b$ have the same parity, and that $\chi(G) = 3$ otherwise.
\hfill $\square$
\end{Ex}

\section{Payan's theorem}\label{section-payans-theorem}

In this section we prove the following theorem.

\begin{Thm}[\cite{Payan}]\label{theorem-Payan}A cube-like graph cannot have chromatic number 3.\end{Thm}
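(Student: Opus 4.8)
\section*{Proof proposal}

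The plan is to prove the equivalent statement that a cube-like graph which is not bipartite has chromatic number at least $4$; together with Lemma~\ref{lemma-bipartite} (bipartite $\iff \chi=2$) and the trivial edgeless case, this rules out $\chi = 3$ in Theorem~\ref{theorem-Payan}. First I would fix the matrix picture. Realizing the graph as $\text{Cay}(\mathbb{Z}_2^n,S)$ with $S=\{s_1,\dots,s_m\}$ and $\phi\colon\mathbb{Z}^m\to\mathbb{Z}_2^n$, $e_k\mapsto s_k$, the kernel $H$ contains $2\mathbb{Z}^m$, so a Heuberger matrix may be taken with columns $2e_1,\dots,2e_m$ together with $0$--$1$ lifts of a basis of the $\mathbb{F}_2$-relation space of the $s_k$. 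The columns $2e_i$ have even sum, so by Lemma~\ref{lemma-bipartite} non-bipartiteness is exactly the statement that some relation has odd weight, i.e.\ that there is \emph{no} $\mathbb{F}_2$-linear functional $\ell$ with $\ell(s)=1$ for every $s\in S$. Proving $\chi\geq 4$ in this case is the whole content.

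Next I would localize the obstruction to a single clean family. Non-bipartiteness supplies an \emph{odd relation}: a subset $T\subseteq S$ with $|T|$ odd and $\sum_{t\in T}t=0$ in $\mathbb{Z}_2^n$. The sub-Cayley graph $\text{Cay}(\langle T\rangle,T)$ is a subgraph of $X$, so $\chi(X)\geq\chi(\text{Cay}(\langle T\rangle,T))$, and because $T$ generates $\langle T\rangle$ and sums to $0$, the assignment $e_i\mapsto t_i$ descends to a \emph{surjective} graph homomorphism from the folded odd cube $F_{|T|}:=\text{Cay}\big(\mathbb{Z}_2^{|T|}/\langle(1,\dots,1)\rangle,\ \{\text{images of }e_1,\dots,e_{|T|}\}\big)$ onto $\text{Cay}(\langle T\rangle,T)$. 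By Lemma~\ref{lemma-pullback} we get $\chi(X)\geq\chi(F_{|T|})$, so it suffices to prove that every folded odd cube satisfies $\chi\geq 4$.

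It is worth being explicit about why the easy tools cannot finish this, since that is precisely where the difficulty lives. The homomorphisms of Lemma~\ref{lemma-homomorphisms} feed into Lemma~\ref{lemma-pullback} and hence only ever yield \emph{upper} bounds on $\chi$, the wrong direction for a lower bound of $4$; likewise Lemmas~\ref{lemma-bipartite} and~\ref{lemma-column-sums-not-coprime} only certify $\chi\leq 3$. Nor can one simply exhibit a $K_4$: the folded $5$-cube is the Clebsch graph, which is triangle-free with $\chi=4$, and its least eigenvalue $-3$ against valency $5$ makes Hoffman's bound give only $\chi\geq 3$, while its independence number must be computed exactly for the vertex-transitive bound $\chi\geq|V|/\alpha$ to reach $4$. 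So the genuine combinatorial heart of the theorem is the statement that a non-bipartite cube-like graph (equivalently, each $F_{2k+1}$) admits no proper $3$-coloring at all.

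For that core step my preferred attack is a character argument over $\mathbb{Z}_2^n$. Encoding a hypothetical $3$-coloring as $f(x)=\omega^{c(x)}$ with $\omega=e^{2\pi i/3}$, properness forces $\overline{f(x)}+f(x)f(x+s)+\overline{f(x+s)}=0$ for all $x$ and all $s\in S$; taking Walsh--Hadamard transforms converts these into the identities $\sum_\gamma|\hat f(\gamma)|^2(-1)^{\langle\gamma,s\rangle}=-\tfrac12$ and $\sum_\gamma\hat f(\gamma)^2(-1)^{\langle\gamma,s\rangle}=-2\,\overline{\hat f(0)}$, holding for every $s\in S$. The plan is to feed an odd relation $\sum_{t\in T}t=0$ into these constraints and force the spectral mass of $f$ onto a single $\gamma_0$ with $\langle\gamma_0,s\rangle=1$ for all $s\in S$, i.e.\ exactly the forbidden functional $\ell$, giving the contradiction. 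I expect closing this implication to be the main obstacle: a small computation shows that the first identity alone is satisfiable even for the Clebsch graph, so the argument must genuinely use the cube-root-of-unity relation $f^3\equiv 1$ (the second identity). Should this prove recalcitrant, the fallback is an induction on $n$ that removes one $\mathbb{Z}_2$-coordinate at a time while tracking how the three color classes split across the two cosets.
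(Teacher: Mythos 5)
Your reduction is correct and is in substance identical to the paper's. The paper likewise extracts from non-bipartiteness a $0$--$1$ column $y$ with an odd number $z$ of ones (your odd relation $T$, with $z=|T|$) and builds, via Lemma \ref{lemma-homomorphisms}, a graph homomorphism from $(w_z^t\;\vert\;2I_z)^{\text{SACG}}_Y$ into $(A'\;\vert\;2I_m)^{\text{SACG}}_X$; that graph $Y$ is precisely your folded odd cube $F_{|T|}$, which the paper identifies with the cube-with-diagonals graph $Q^d_{z-1}$ of even dimension $z-1$. (One small omission: you should treat the degenerate case $|T|=1$ separately, since then $X$ has a loop; the paper does this.) You have also diagnosed correctly why no soft argument can finish from there: the machinery of Section \ref{section-preliminaries} produces only upper bounds on $\chi$, and the folded $5$-cube is triangle-free, so no clique argument is available.

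The genuine gap is that you never prove the one statement that carries all the content of the theorem: that $Q^d_{2k}$ (equivalently, each folded odd cube) admits no proper $3$-coloring. Your Fourier plan is left open by your own account --- you write down the two transformed identities but supply no argument forcing the spectral mass of $f$ onto a single character, and you concede that the first identity alone cannot do it --- so as it stands the proposal establishes nothing beyond the reduction. Your fallback, an induction removing one $\mathbb{Z}_2$-coordinate at a time, fails as stated for a parity reason: $Q^d_m$ is bipartite whenever $m$ is odd (every generator, including the diagonal $w_m$, flips the weight parity), so the invariant ``not properly $3$-colorable'' cannot be carried through odd dimensions and any one-step descent collapses. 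The paper fills exactly this hole with Sokolov\'{a}'s Theorem \ref{theorem-Sokolova}, whose induction descends \emph{two} coordinates at a time: given a proper $3$-coloring $c$ of $Q^d_{n+2}$, one defines $c'(v)=k$ when $\{c(v*(0,0)),\,c((v+w_n)*(1,0))\}$ equals $\{k\}$ or $\{k,k+1\}$, verifies that $c'$ is a proper $3$-coloring of $Q^d_n$, and bottoms out at the base case $Q^d_2\cong K_4$. Some such two-step descent, or a genuinely completed spectral argument, is what your write-up is missing; without it the theorem is not proved.
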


Throughout this section we take cube-like graph to be a Cayley graph on $\mathbb{Z}_2^n$.

A special case of Theorem \ref{theorem-Payan} had previously been proven by Sokolov\'{a} in \cite{Sokolova}.  We will derive Payan's theorem from Sokolov\'{a}'s theorem, and for that reason we begin by discussing the latter.

For a positive integer $n$, the \emph{$n$-dimensional cube-with-diagonals graph} $Q^d_n$ is defined by \[Q_n^d=\text{Cay}(\mathbb{Z}_2^n,\{e_1,\dots,e_n,w_n\}),\] where $e_j$ is the $n$-tuple in $\mathbb{Z}_2^n$ with $1$ in the $j$th entry and $0$ everywhere else, and $w_n$ is the $n$-tuple in $\mathbb{Z}_2^n$ with $1$ in every entry.  We can visualize $Q^d_n$ as a hypercube with edges (called ``diagonals,'' hence the name and the superscript `$d$') added to join each pair of antipodal vertices.  Sokolov\'{a} proved that for $n$ even, $Q^d_n$ has chromatic number $4$.  We present here a condensed version of the proof in \cite{Sokolova} of this result.

\begin{Thm}[\cite{Sokolova}]\label{theorem-Sokolova}If $n$ is even, then $\chi(Q_n^d)=4$.\end{Thm}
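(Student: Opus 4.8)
The plan is to prove the two inequalities $\chi(Q_n^d)\le 4$ and $\chi(Q_n^d)\ge 4$ separately, with the second being where the hypothesis that $n$ is even does its essential work.

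For the upper bound I would exhibit an explicit $4$-coloring. Color a vertex $x=(x_1,\dots,x_n)$ by the pair $c(x)=(x_1+\cdots+x_n \bmod 2,\; x_1)$, taking values in $\mathbb{Z}_2\times\mathbb{Z}_2$. Along an $e_j$-edge the weight parity flips, so the first coordinate of $c$ changes; along the diagonal $w_n$-edge the weight changes by $n$, which is even, so the first coordinate is unchanged, but the $x_1$-coordinate flips because $w_n$ has a $1$ in position $1$. Thus $c$ is proper and $\chi(Q_n^d)\le 4$. (This is exactly where $n$ even enters on the upper-bound side: for $n$ odd the weight parity alone already separates the diagonal, and $Q_n^d$ is in fact bipartite.)

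For the lower bound I would argue that no proper $3$-coloring exists. Suppose $c\colon \mathbb{Z}_2^n\to\mathbb{Z}/3\mathbb{Z}$ is proper. On each $e_j$-edge $c(x+e_j)-c(x)\in\{+1,-1\}\pmod 3$; let $\epsilon_j(x)\in\{+1,-1\}\subset\mathbb{Z}$ be the integer representative. The key lemma is that these increments lift $c$ to an integer ``height function'' $\Phi\colon\mathbb{Z}_2^n\to\mathbb{Z}$ with $\Phi(x+e_j)-\Phi(x)=\epsilon_j(x)$ and $\Phi\equiv c\pmod 3$. To see this, note that around any hypercube $4$-cycle $x,x+e_i,x+e_i+e_j,x+e_j$ the alternating sum of the four $\epsilon$-values is $\equiv 0\pmod 3$ (it telescopes through $c$) and is a sum of four terms $\pm 1$, hence an even integer in $[-4,4]$; the only such value that is $\equiv 0 \pmod 3$ is $0$, so the integer curl of $\epsilon$ vanishes on every $4$-cycle. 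Since the $4$-cycles generate the cycle space of the hypercube, $\epsilon$ is a coboundary, yielding $\Phi$ (equivalently, $\Phi(x)$ is the $\epsilon$-sum along any path from $0$, which is then path-independent). Now set $\Psi(x)=\Phi(x+w_n)-\Phi(x)$. The diagonal edges being properly colored gives $\Psi(x)\not\equiv 0\pmod 3$, and in particular $\Psi(x)\neq 0$, for all $x$. Because a coordinate-flipping path from $x$ to $x+w_n$ has length $n$, the quantity $\Psi(x)$ is a sum of $n$ unit steps and so $\equiv n\equiv 0\pmod 2$: here is where $n$ even is indispensable, as it forces $\Psi$ to take only even values. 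Moreover $\Psi(x+w_n)=-\Psi(x)$ since $2w_n=0$, and along an $e_k$-edge $\Psi(x+e_k)-\Psi(x)=\epsilon_k(x+w_n)-\epsilon_k(x)\in\{-2,0,2\}$. Choosing $x_0$ with $\Psi(x_0)>0$ and walking along a coordinate path to $x_0+w_n$, where $\Psi=-\Psi(x_0)<0$, the even-integer sequence of $\Psi$-values changes sign while moving in steps of size at most $2$, so it must attain the value $0$ — contradicting $\Psi\neq 0$. Hence $Q_n^d$ is not $3$-colorable, and with the upper bound we conclude $\chi(Q_n^d)=4$.

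I expect the main obstacle to be establishing the integer height function $\Phi$ cleanly: verifying that the $\pm 1$ increments have vanishing integer curl on all $4$-cycles and that this suffices for exactness on the hypercube. Relatedly, the heart of the matter is isolating precisely where the evenness of $n$ enters, namely that it makes $\Psi$ even-valued, so that ``nonzero modulo $3$'' upgrades to ``never equal to $0$'' and thereby blocks the sign change that $\Psi(x+w_n)=-\Psi(x)$ would otherwise demand. Everything else is routine.
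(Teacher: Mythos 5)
Your proof is correct, and its lower-bound half takes a genuinely different route from the paper's. The upper bounds coincide: both color $Q_n^d$ by pulling back a proper coloring of $K_4$ along a group homomorphism $\mathbb{Z}_2^n\to\mathbb{Z}_2^2$ sending every generator (including $w_n$, using that $n$ is even) to a nonzero element. For the lower bound, the paper inducts on even $n$: given a proper $3$-coloring $c$ of $Q_{n+2}^d$, it defines $c'$ on $\mathbb{Z}_2^n$ by $c'(v)=k$ when $\{c(v*(0,0)),\,c((v+w_n)*(1,0))\}$ equals $\{k\}$ or $\{k,k+1\}$ (colors in $\mathbb{Z}_3$, with $*$ denoting concatenation of tuples), checks by a case analysis that $c'$ properly $3$-colors $Q_n^d$, and bottoms out at $Q_2^d\cong K_4$. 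You instead lift the $\mathbb{Z}_3$-coloring to an integer height function $\Phi$ and run a discrete intermediate-value argument on $\Psi(x)=\Phi(x+w_n)-\Phi(x)$, using the antipodal antisymmetry $\Psi(x+w_n)=-\Psi(x)$ together with the evenness of $\Psi$ (this is exactly where $n$ even enters) to force a zero of $\Psi$, i.e., a monochromatic diagonal. Your route buys two things: it needs no induction and no cleverly guessed merging rule, and it converts the paper's remark immediately following this theorem --- that Sokolov\'{a}'s result has a Borsuk--Ulam flavor --- into an actual proof mechanism, since a zero of $\Psi$ is precisely a monochromatic antipodal pair produced by antipodality plus a discrete intermediate value theorem. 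The paper's route is shorter and avoids the one piece of machinery you assert without proof: that the $4$-cycles generate the cycle space of the hypercube (equivalently, that vanishing curl on squares gives path-independence of your $\epsilon$-sums, so that $\Phi$ is well defined). That fact is true and standard (e.g., induct on $n$, using squares to slide any cycle into a facet $Q_{n-1}$ and cancel backtracking edges via $\epsilon_j(x+e_j)=-\epsilon_j(x)$), but since it is the load-bearing step of your lifting lemma, you should prove it or cite it explicitly.
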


\begin{proof}First observe that $(x_1,\dots,x_n)\mapsto (x_1,x_2+\cdots+x_n)$ defines a group homomorphism from $\mathbb{Z}^n_2$ to $\mathbb{Z}^2_2$ mapping $\{e_1,\dots,e_n,w_n\}$ to $\{(1,0),(0,1),(1,1)\}$.  So this defines a graph homomorphism from $Q^d_n$ to $Q_2^d\cong K_4$, the complete graph on $4$ vertices.  Hence $\chi(Q_n^d)\leq 4$.

Next we show that $Q^d_n$ is not properly $3$-colorable.  We do so by induction.  For the base case ($n=2$), we saw previously that $Q^d_2\cong K_4$, which is not properly $3$-colorable.  Now assume that $Q^d_n$ is not properly $3$-colorable, and we will show that $Q^d_{n+2}$ is not properly $3$-colorable.  Suppose to the contrary that $c\colon \mathbb{Z}^{n+2}_2\to\mathbb{Z}_3$ is a proper $3$-coloring.  For two tuples $v=(v_1,\dots,v_j)$ and $u=(u_1,\dots,u_k)$, we define $v*u=(v_1,\dots,v_j,u_1,\dots,u_k)$.  Define $c'\colon\mathbb{Z}^{n}_2\to\mathbb{Z}_3$ by $c'(v)=k$ if $\{c(v*(0,0)),c((v+w_n)*(1,0))\}$ equals either $\{k\}$ or $\{k,k+1\}$.  A straightforward case-by-case analysis shows that $c'$ is a proper $3$-coloring of $Q^d_n$, which is a contradiction.\end{proof}

\begin{Rmk}We briefly digress to remark that Sokolov\'{a}'s theorem can be restated as follows.  In any (not necessarily proper) $3$-coloring of the vertices of an even-dimensional hypercube, there must exist two antipodal vertices, both of which are assigned the same color.  Stated this way, it brings to mind various topological theorems such as the hairy ball theorem and the Borsuk–Ulam theorem.   
We wonder whether there might be some connection between Sokolov\'{a}'s combinatorial result and one or more of these facts from topology, perhaps along the lines of the connection between Sperner's lemma and the Brouwer fixed point theorem.\hfill$\square$\end{Rmk}

Using Heuberger matrices, we will now see how Sokolov\'{a}'s theorem implies Payan's theorem.  The key idea is to show that every nonbipartite cube-like graph contains a homomorphic image of an even-dimensional cube-with-diagonals graph.

\begin{proof}[Proof of Theorem \ref{theorem-Payan}]Let $X=\text{Cay}(\mathbb{Z}_2^n,S)$ be a nonbipartite cube-like graph.  Because $2x=0$ for all $x\in S$, there is a Heuberger matrix $M_X$ associated to $X$ whose last $m$ columns are $2e_1,\dots,2e_m$, where $m=|S|$.  That is, $M_X$ has the form $(A\;\vert\;2I_m)$ for some integer matrix $A$.  Here $I_m$ is the $m\times m$ identity matrix.  Using column operations as in Lemma \ref{lemma-isomorphisms}, we have that \[(A\;\vert\;2I_m)_X^{\text{SACG}}\cong (A'\;\vert\;2I_m)_X^{\text{SACG}},\] for some matrix $A'$ whose entries are all in $\{0,1\}$.  Because $X$ is nonbipartite, by Lemma \ref{lemma-bipartite}, some column $y$ of $A'$ contains an odd number $z$ of nonzero entries.  Hence by Lemma \ref{lemma-homomorphisms}, parts (\ref{lemma-homomorphisms-append-columns}) and (\ref{lemma-homomorphisms-append-zero-row}), we have homomorphisms\[(w_z^t\;\vert\;2I_z)_Y^{\text{SACG}}\xrightarrow[\tau_1]{\ocirc}(y\;2e_{i_1}\;\cdots \;2e_{i_z})^{\text{SACG}}\xrightarrow[\tau_2]{\ocirc}(A'\;\vert\;2I_m)_X^{\text{SACG}}\]where $i_1,\dots,i_z$ are the indices of the nonzero entries of $y$, and $w_z^t$ is a column vector of length $z$ with a $1$ in every entry.  For $\tau_1$, we insert zero rows as appropriate; for $\tau_2$ we append the requisite columns.  So $\chi(Y)\leq\chi(X)$ by Lemma \ref{lemma-pullback}.  If $z=1$, then $X$ has loops and is not properly colorable.  So assume $z\geq 3$.  Observe that $Y\cong Q_{z-1}^d$.  An application of Theorem \ref{theorem-Sokolova} then completes the proof.\end{proof}

\begin{Rmk}We note that cube-like graphs are sometimes referred to in the literature as \emph{binary Cayley graphs}, and that the cube-with-diagonals graph is also called a \emph{folded cube graph}.  A \emph{projective cube} is obtained from a hypercube by identifying antipodal vertices.  This terminology is used, for example, in \cite{Beaudou-Naserasr-Tardif}, where it is proved that every nonbipartite cube-like graph contains a projective cube as a subgraph.\end{Rmk}

\section*{Acknowledgments}

The authors wish to thank Jaan Parts for many helpful suggestions---in particular, for coining the term ``Heuberger matrix.''  We would also like to thank the referee for an exceptionally detailed read and for a large number of helpful and astute suggestions.

\providecommand{\MR}[1]{}
\providecommand{\bysame}{\leavevmode\hbox to3em{\hrulefill}\thinspace}
\providecommand{\MR}{\relax\ifhmode\unskip\space\fi MR }
\providecommand{\MRhref}[2]{%
  \href{http://www.ams.org/mathscinet-getitem?mr=#1}{#2}
}
\providecommand{\href}[2]{#2}



\begin{thebibliography}{10}

\bibitem{Alon}
Noga Alon, \emph{The chromatic number of random {C}ayley graphs}, European J.
  Combin. \textbf{34} (2013), no.~8, 1232--1243. \MR{3082195}

\bibitem{Barajas-Serra}
Javier Barajas and Oriol Serra, \emph{On the chromatic number of circulant
  graphs}, Discrete Math. \textbf{309} (2009), no.~18, 5687--5696. \MR{2567972}

\bibitem{Beaudou-Naserasr-Tardif}
Beaudou, Laurent and Naserasr, Reza and Tardif, Claude, \emph{Homomorphisms of binary {C}ayley graphs}, Discrete Math. \textbf{338} (2015), no.~12, 2539--2544. \MR{3373358}
  
\bibitem{Broere-et-al}
I.~Broere, D.~D\"{o}man, and J.~N. Ridley, \emph{Homomorphisms of binary {C}ayley graphs}, Quaestiones Math. \textbf{11}
  (1988), no.~1, 91--93. \MR{931024}

\bibitem{Cervantes-Krebs-small-cases}
Jonathan Cervantes and Mike Krebs, \emph{Chromatic numbers of {C}ayley graphs of
  abelian groups: Cases of small dimension and rank}, 2023,
  \url{https://arxiv.org/abs/2303.06272}.

\bibitem{Cervantes-Krebs-Zhu}
\bysame, \emph{A generalization of {Z}hu's theorem on six-valent integer
  distance graphs}, 2023, \url{https://arxiv.org/abs/2303.06276}.

\bibitem{Chang}
Gerard~J. Chang, Changhong Lu, and Sanming Zhou, \emph{No-hole 2-distant
  colorings for {C}ayley graphs on finitely generated abelian groups}, Discrete
  Math. \textbf{307} (2007), no.~14, 1808--1817. \MR{2316819}

\bibitem{Codenotti-et-al}
Bruno Codenotti, Ivan Gerace, and Sebastiano Vigna, \emph{Hardness results and
  spectral techniques for combinatorial problems on circulant graphs}, Linear
  Algebra Appl. \textbf{285} (1998), no.~1-3, 123--142. \MR{1653503}

\bibitem{de-Bruijn-Erdos}
N.~G. de~Bruijn and P.~Erd\H{o}s, \emph{A colour problem for infinite graphs
  and a problem in the theory of relations}, Nederl. Akad. Wetensch. Proc. Ser.
  A. {\bf 54} = Indagationes Math. \textbf{13} (1951), 369--373. \MR{0046630}

\bibitem{deGrey}
Aubrey D. N.~J. de~Grey, \emph{The chromatic number of the plane is at least
  5}, Geombinatorics \textbf{28} (2018), no.~1, 18--31. \MR{3820926}

\bibitem{Dummit_Foote_2004}
David~S. Dummit and Richard~M. Foote, \emph{Abstract algebra}, 3rd ed., Wiley,
  2004.

\bibitem{EES}
R.~B. Eggleton, P.~Erd\H{o}s, and D.~K. Skilton, \emph{Colouring the real
  line}, J. Combin. Theory Ser. B \textbf{39} (1985), no.~1, 86--100.
  \MR{805458}

\bibitem{Ekedahl}
Torsten Ekedahl, \emph{An infinite version of the {C}hinese remainder theorem},
  Comment. Math. Univ. St. Paul. \textbf{40} (1991), no.~1, 53--59.
  \MR{1104780}

\bibitem{Godsil}
Chris Godsil and Brendan Rooney, \emph{Hardness of computing clique number and
  chromatic number for {C}ayley graphs}, European J. Combin. \textbf{62}
  (2017), 147--166. \MR{3621730}

\bibitem{Heuberger}
Clemens Heuberger, \emph{On planarity and colorability of circulant graphs},
  Discrete Mathematics \textbf{268} (2003), no.~1, 153–169.

\bibitem{Hoffman}
Alan~J. Hoffman, \emph{On eigenvalues and colorings of graphs}, Graph {T}heory
  and its {A}pplications ({P}roc. {A}dvanced {S}em., {M}ath. {R}esearch
  {C}enter, {U}niv. of {W}isconsin, {M}adison, {W}is., 1969), Academic Press,
  New York, 1970, pp.~79--91. \MR{0284373}

\bibitem{Ilic}
Aleksandar Ili\'{c} and Milan Ba\v{s}i\'{c}, \emph{On the chromatic number of
  integral circulant graphs}, Comput. Math. Appl. \textbf{60} (2010), no.~1,
  144--150. \MR{2651892}

\bibitem{Jensen}
Tommy~R. Jensen and Bjarne Toft, \emph{Graph coloring problems},
  Wiley-Interscience Series in Discrete Mathematics and Optimization, John
  Wiley \& Sons, Inc., New York, 1995, A Wiley-Interscience Publication.
  \MR{1304254}

\bibitem{Kokkala-et-al}
Janne~I. Kokkala and Patric R.~J. \"{O}sterg\aa rd, \emph{The chromatic number
  of the square of the 8-cube}, Math. Comp. \textbf{87} (2018), no.~313,
  2551--2561. \MR{3802446}

\bibitem{Krebs-Shaheen}
Mike Krebs and Anthony Shaheen, \emph{Expander families and {C}ayley graphs},
  Oxford University Press, Oxford, 2011, A beginner's guide. \MR{3137611}

\bibitem{Liu}
Daphne Der-Fen Liu, \emph{From rainbow to the lonely runner: a survey on
  coloring parameters of distance graphs}, Taiwanese J. Math. \textbf{12}
  (2008), no.~4, 851--871. \MR{2426532}

\bibitem{MathOverflow}
{MathOverflow}, \emph{{C}hromatic numbers of infinite abelian {C}ayley graphs},
  \url{https://mathoverflow.net/questions/304715/chromatic-numbers-of-infinite-abelian-cayley-graphs},
  accessed {A}pr. 5, 2021.

\bibitem{Meszka}
Mariusz Meszka, Roman Nedela, and Alexander Rosa, \emph{The chromatic number of
  5-valent circulants}, Discrete Math. \textbf{308} (2008), no.~24, 6269--6284.
  \MR{2464916}

\bibitem{Nicoloso}
S.~Nicoloso and U.~Pietropaoli, \emph{Vertex-colouring of 3-chromatic circulant
  graphs}, Discrete Appl. Math. \textbf{229} (2017), 121--138. \MR{3679065}

\bibitem{Payan}
Charles Payan, \emph{On the chromatic number of cube-like graphs}, Discrete
  Math. \textbf{103} (1992), no.~3, 271--277. \MR{1171780}

\bibitem{Pommersheim}
James~E. Pommersheim, Tim~K. Marks, and Erica~L. Flapan, \emph{Number theory: A
  lively introduction with proofs, applications, and stories}, Wiley, Hoboken,
  New Jersey, 2010.

\bibitem{Sabidussi}
Gert Sabidussi, \emph{Vertex-transitive graphs}, Monatsh. Math. \textbf{68}
  (1964), 426--438. \MR{175815}

\bibitem{Shelah-Soifer}
Saharon Shelah and Alexander Soifer, \emph{Axiom of choice and chromatic number
  of the plane}, J. Combin. Theory Ser. A \textbf{103} (2003), no.~2, 387--391.
  \MR{1996076}

\bibitem{Soifer}
Alexander Soifer, \emph{The mathematical coloring book}, Springer, New York,
  2009, Mathematics of coloring and the colorful life of its creators, With
  forewords by Branko Gr\"{u}nbaum, Peter D. Johnson, Jr. and Cecil Rousseau.
  \MR{2458293}

\bibitem{Sokolova}
Marie Sokolov\'{a}, \emph{The chromatic number of extended odd graphs is four},
  \v{C}asopis P\v{e}st. Mat. \textbf{112} (1987), no.~3, 308--311. \MR{905977}

\bibitem{Tao}
Terence Tao, \emph{Polymath16},
  \url{https://dustingmixon.wordpress.com/2018/04/14/polymath16-first-thread-simplifying-de-greys-graph/},
  accessed {A}pr. 13, 2021.

\bibitem{Vinh}
Le~Anh Vinh, \emph{On chromatic number of unit-quadrance graphs (finite
  {E}uclidean graphs)}, www.arxiv.org, 2005,
  \url{https://arxiv.org/abs/math/0510092}.

\bibitem{west_introduction_2000}
Douglas~B. West, \emph{Introduction to graph theory}, 2 ed., Prentice Hall,
  Upper Saddle River, NJ, September 2000.

\bibitem{Zhu}
Xuding Zhu, \emph{Circular chromatic number of distance graphs with distance
  sets of cardinality 3}, J. Graph Theory \textbf{41} (2002), no.~3, 195--207.
  \MR{1932297}

\end{thebibliography}

\end{document}